\numberwithin{equation}{section}
\numberwithin{figure}{section}
\theoremstyle{plain}
\newtheorem{thm}{\protect\theoremname}
\theoremstyle{plain}
\newtheorem{cor}[thm]{\protect\corollaryname}
\theoremstyle{plain}
\newtheorem{prop}[thm]{\protect\propositionname}
\theoremstyle{remark}
\newtheorem{rem}[thm]{\protect\remarkname}
\newtheorem*{rem*}{Remark}
\theoremstyle{definition}
\newtheorem{defn}[thm]{\protect\definitionname}
\theoremstyle{plain}
\newtheorem{lem}[thm]{\protect\lemmaname}
\providecommand{\definitionname}{Definition}
\providecommand{\lemmaname}{Lemma}
\providecommand{\corollaryname}{Corollary}
\providecommand{\propositionname}{Proposition}
\providecommand{\remarkname}{Remark}
\providecommand{\theoremname}{Theorem}
\providecommand{\corollaryname}{Corollary}
\providecommand{\definitionname}{Definition}
\providecommand{\lemmaname}{Lemma}
\providecommand{\propositionname}{Proposition}
\providecommand{\remarkname}{Remark}
\providecommand{\theoremname}{Theorem}
\begin{document}
\title[KE Bergman metrics]{ Kähler-Einstein Bergman metrics on pseudoconvex domains of dimension
two}
\author{Nikhil Savale \& Ming Xiao}
\thanks{N. S. was partially supported by the DFG funded project CRC/TRR 191.
\\
 M. X. was partially supported by the NSF grants DMS-1800549 and DMS-2045104.}
\address{Universität zu Köln, Mathematisches Institut, Weyertal 86-90, 50931
Köln, Germany}
\email{nsavale@math.uni-koeln.de}
\address{Department of Mathematics, University of California at San Diego,
La Jolla, CA 92093, USA}
\email{m3xiao@ucsd.edu}
\subjclass[2000]{32F45, 32Q20, 81Q20 }
\begin{abstract}
We prove that a two dimensional pseudoconvex domain of finite type
with a Kähler-Einstein Bergman metric is biholomorphic to the unit
ball. This answers an old question of Yau for such domains. The proof
relies on asymptotics of derivatives of the Bergman kernel along critically
tangent paths approaching the boundary, where the order of tangency
equals the type of the boundary point being approached. 
\end{abstract}

\maketitle

\section{\label{sec:Introdunction} Introduction}

\global\long\def\-{\overline{}}%
\global\long\def\Ol{\overline{}}%
\global\long\def\k{K\"ahler}%
\global\long\def\ke{K\"ahler-Einstein}%
Let $D\subset\mathbb{C}^{n}$ be a bounded pseudoconvex domain. There
exist two natural canonical metrics defined in its interior. The first
is the Bergman metric \cite{Bergman70-book} defined using the Bergman
kernel. The other is the complete Kähler-Einstein metric in $D$,
whose existence was established by the work of Cheng-Yau and Mok-Yau
\cite{Cheng-Yau-80,Mok-Yau1983}. The importance of the metrics stems
from their biholomorphic invariance property and intimate connections
with the boundary geometry.


It is hence a natural question to ask when the two canonical metrics
coincide; i.e. when the Bergman metric on the domain $D$ is Kähler-Einstein.
It was asked, in some form by Yau \cite[pg. 679]{Yau82-problems},
whether this happens if and only if $D$ is homogeneous. The reverse
direction of Yau's question (i.e. if $D$ is homogeneous, then the
Bergman metric is Kähler-Einstein) follows from a simple observation
using the Bergman invariant function (cf. Fu-Wong \cite{Fu-Wong97}).
The challenging aspect of Yau's question is the forward direction
which is still wide open in its full generality. It should be noted
that homogeneous domains have been classified in \cite{Vinberg-Gindikin-PS-1963}
and the only smoothly bounded homogeneous domain is the ball, as a
consequence of Wong \cite{Wong1977} and Rosay \cite{Rosay1979}.

A more tractable case of Yau's question is when $D$ has strongly
pseudoconvex smooth boundary. An explicit conjecture in this case
was posed earlier by Cheng \cite{Ch}: if the Bergman metric of a
smoothly bounded strongly pseudoconvex domain is Kähler-Einstein,
then the domain is biholomorphic to the unit ball. The first breakthrough toward solving Cheng's conjecture is the seminal work \cite{Fu-Wong97}. In this paper, Fu and Wong brought in Fefferman's expansion of the Bergman kernel as well as its connection with the Ramadanov conjecture to study the problem, influencing later studies. Cheng's conjecture in $\mathbb{C}^2$ then follows by combining the results of Fu and Wong \cite{Fu-Wong97} with a uniformization theorem established by Nemirovski and Shafikov \cite{Nemirovski-Shafikov-2006}. In higher dimensions, the conjecture was proved more recently by Huang and
the second author \cite{Huang-Xiao21}.
See also \cite{HuLi,EXX}
for some
further work on generalizing Cheng's conjecture to Stein spaces. 

The proofs of Cheng's conjecture in \cite{Fu-Wong97,Huang-Xiao21}
fundamentally use Fefferman's asymptotic result \cite{Fefferman74}
for the Bergman kernel, together with its connections to the CR invariant
theory for the boundary geometry. In the broader context of pseudoconvex
finite type domains, both tools are either absent or insufficiently
understood. As a result, little progress was made towards understanding
Yau's question in this context. To the best knowledge of the authors,
the only known result was due to Fu-Wong \cite{Fu-Wong97}. They showed
that, on a smoothly bounded, complete Reinhardt, pseudoconvex domain
of finite type domain in $\mathbb{C}^{2}$, if the Bergman metric
is Kähler-Einstein, then the domain is biholomorphic to the unit ball.
Their proof utilized the non-tangential limit of the Bergman invariant
function (see Fu \cite{Fu1996}). Besides, their proof used the aid
of a computer, again reflecting the intricacy of the problem in the
more general finite type case.

Our main theorem below gives an affirmative answer to Yau's question
for smoothly bounded pseudoconvex domains of finite type in dimension
two.

\begin{thm}
\label{thm:main theorem} Let $D\subset\mathbb{C}^{2}$ be a smoothly
bounded pseudoconvex domain of finite type. If the Bergman metric
of $D$ is Kähler-Einstein, then $D$ is biholomorphic to the unit
ball in $\mathbb{C}^{2}$. 
\end{thm}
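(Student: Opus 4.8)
The plan is to reduce Theorem~\ref{thm:main theorem} to the strongly pseudoconvex case, where it is already known: a smoothly bounded strongly pseudoconvex domain whose Bergman metric is K\"ahler--Einstein is biholomorphic to $\mathbb{B}^{2}$ by Fu--Wong \cite{Fu-Wong97} together with Nemirovski--Shafikov \cite{Nemirovski-Shafikov-2006}. So it suffices to show that $D$ has \emph{no} weakly pseudoconvex boundary point. (Since $D$ is of finite type, $\partial D$ contains no Levi-flat piece, so its strongly pseudoconvex points form an open dense subset of $\partial D$ whose complement has measure zero.) Assume, for contradiction, that $p\in\partial D$ is weakly pseudoconvex; then $p$ has finite type $2m$ with $m\ge 2$.

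Near $p$ I would take local holomorphic coordinates putting $D$ in the normal form $\{\,\mathrm{Re}\,w+P(z,\bar z)+(\text{higher order})<0\,\}$, with $P$ a subharmonic polynomial of degree $2m$ and no harmonic terms, and approach $p$ along a \emph{critically tangent} path $q_{\epsilon}=(z_{\epsilon},w_{\epsilon})\to p$ with $|z_{\epsilon}|\asymp\epsilon^{1/2m}$ and $-\mathrm{Re}\,w_{\epsilon}\asymp\epsilon$, so that the order of tangency of the path to the complex tangent hypersurface at $p$ equals the type $2m$. Under the anisotropic dilations $\Lambda_{\epsilon}(z,w)=(\epsilon^{-1/2m}z,\epsilon^{-1}w)$ the rescaled domains converge to the model $M_{p}=\{\,\mathrm{Re}\,w+P(z,\bar z)<0\,\}$, and the key analytic input is that the rescaled Bergman kernels converge in $C^{\infty}_{\mathrm{loc}}$ to $K_{M_{p}}$ --- equivalently, one obtains precise asymptotics along $q_{\epsilon}$ not only of $K_{D}$ but of all its derivatives, governed by the model. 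This is where the finite-type Bergman kernel theory in $\mathbb{C}^{2}$ of Catlin, Herbort, McNeal and Diederich--Herbort--Ohsawa enters, now pushed to control derivatives uniformly under the degenerate scaling. Since the relation $\mathrm{Ric}(\omega_{B})=-k\,\omega_{B}$ is biholomorphically invariant and stable under this convergence, the Bergman metric of $M_{p}$ is again K\"ahler--Einstein with the same constant; blowing up $M_{p}$ once more at one of its (strongly pseudoconvex) boundary points produces $\mathbb{B}^{2}$, whose Bergman metric has Einstein constant exactly $-1$, so $k=1$. Writing $\beta_{D}=\det(g_{i\bar j})/K_{D}$ for the Bergman invariant function and using the bookkeeping identity $\mathrm{Ric}(\omega_{B})+\omega_{B}=-i\partial\bar\partial\log\beta_{D}$, the case $k=1$ says exactly that $\log\beta_{D}$ is pluriharmonic on $D$. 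As $\beta_{D}$ is bounded (finite type, $n=2$) and, by Fefferman's expansion \cite{Fefferman74}, tends to the ball value $\beta_{\mathbb{B}^{2}}$ at every strongly pseudoconvex boundary point, its nontangential boundary value equals $\beta_{\mathbb{B}^{2}}$ almost everywhere; the Poisson representation of bounded pluriharmonic functions then forces $\beta_{D}\equiv\beta_{\mathbb{B}^{2}}$ on $D$, hence, after rescaling, $\beta_{M_{p}}\equiv\beta_{\mathbb{B}^{2}}$ on $M_{p}$.

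To close the argument I would compute $\beta_{M_{p}}$ directly and show this equality is impossible when $m\ge 2$: the symmetries $z\mapsto e^{i\theta}z$, $w\mapsto w+it$, and the anisotropic dilations of $M_{p}$ reduce $\beta_{M_{p}}$ (equivalently the ratio $-\mathrm{Ric}_{z\bar z}/g_{z\bar z}$) to a function of a single real variable, and the model Bergman kernel --- accessible via a Mellin/Fourier transform in the symmetry directions, in the spirit of Fu \cite{Fu1996} --- shows this function is genuinely nonconstant for $m\ge 2$, its value along $q_{\epsilon}$ differing from $\beta_{\mathbb{B}^{2}}$. This contradiction shows $D$ is strongly pseudoconvex, and then $D\cong\mathbb{B}^{2}$ by \cite{Fu-Wong97,Nemirovski-Shafikov-2006}. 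I expect the principal obstacle to be precisely the $C^{\infty}_{\mathrm{loc}}$ convergence of the rescaled Bergman kernels along the critically tangent path: the on-diagonal size estimates of Catlin, Herbort and McNeal do not suffice, and one must control derivatives through order four (needed for the Ricci form) uniformly under the degenerate anisotropic dilation, together with the corresponding stability of the $\bar\partial$-problem; by comparison the ancillary points --- absence of Levi-flat pieces, the measure-zero estimate for the degenerate locus, and the explicit non-K\"ahler--Einstein computation on $M_{p}$ --- should be comparatively routine.
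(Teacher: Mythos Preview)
Your overall architecture matches the paper's: reduce to the strongly pseudoconvex case via \cite{Fu-Wong97,Nemirovski-Shafikov-2006}, approach a hypothetical boundary point of type $r>2$ along a critically tangent path, pass to the model, and derive a contradiction from the constancy of the Bergman invariant. You also correctly locate the principal analytic hurdle --- uniform control of derivatives of $K_{D}$ along the critically tangent path --- which is exactly what the paper establishes by extending the Hsiao--Savale asymptotics to such paths. Your derivation that $\beta_{D}$ is constant (pluriharmonicity of $\log\beta_{D}$ plus boundary behaviour at strongly pseudoconvex points) is essentially the paper's Proposition~\ref{cor:Cor KE iff B fn const}.

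The genuine gap is the final step, which you call ``comparatively routine.'' You assert that ``the symmetries $z\mapsto e^{i\theta}z$, $w\mapsto w+it$, and the anisotropic dilations of $M_{p}$'' reduce $\beta_{M_{p}}$ to a function of a single real variable. But the polynomial $P$ in the normal form is an \emph{arbitrary} homogeneous, subharmonic, non-harmonic real polynomial of degree $r$ with no pure terms; it need not be rotationally invariant. For example $P=|z|^{4}+\tfrac{1}{2}(z^{3}\bar z+z\bar z^{3})$ is an admissible model at a point of type $4$, and $z\mapsto e^{i\theta}z$ is not a symmetry of $M_{P}$. So your reduction to one variable fails in general, and there is no ``single'' model to rule out \`a la Fu \cite{Fu1996}. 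Worse, the paper explicitly notes (in its closing remark) that comparing the leading non-logarithmic terms of the model Monge--Amp\`ere identity produces only tautologies, so even when one does have symmetry, a direct value comparison is not enough to reach a contradiction.

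The paper fills precisely this gap by a two-stage argument you would still need to supply. First, after matching leading asymptotics of $J(K_{D})=\tfrac{9\pi^{2}}{2}K_{D}^{4}$ along the path to obtain an identity for the model kernel, one substitutes $z_{1}\mapsto t^{1/r}z_{1}$ and expands each term as $t\to\infty$ using the standard positive-line-bundle Bergman expansion on $\mathbb{C}$; a $(\ln t)^{4}$ term appears on the right but cannot appear on the left, forcing the third coefficient $b_{3}$ of that expansion to vanish identically. Second, a nontrivial piece of Hermitian algebra (the paper's Theorem~\ref{thm: thm about vanish coeff.}) shows that $b_{3}\equiv 0$ for a homogeneous subharmonic $P$ forces $\partial_{z}\partial_{\bar z}P=c_{0}|z|^{r-2}$, i.e.\ $P$ must be radial. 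Only \emph{after} this reduction does the model kernel become explicitly computable, and evaluating the model identity at $z_{1}=0$ then yields a quadratic in $r$ whose only positive solution is $r=2$. In short, the step you flagged as routine is exactly where the new ideas of the paper live.
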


A key role is again played by the boundary asymptotics for the Bergman
kernel. For two dimensional pseudoconvex domains of finite type, Hsiao
and the first author \cite{HsiaoSavale-2022} recently described the
asymptotics of the Bergman kernel along transversal paths approaching
the boundary. For our proof we shall need to extend this asymptotic
result to tangential paths approaching the boundary. The paths shall further be chosen to be \textit{critically
tangent}; their order of tangency with the boundary equals the type
of the boundary point that is being approached (see Remark
\prettyref{rem:(Critical-tangency)} below for a further discussion
of this choice).

As a consequence of our main theorem, we also positively answer Yau's
question for two dimensional bounded domains with real analytic boundary
(such domains are always of finite type). 
\begin{cor}
Let $D\subset\mathbb{C}^{2}$ be a bounded pseudoconvex domain with
real analytic boundary. If the Bergman metric of $D$ is Kähler-Einstein,
then $D$ is biholomorphic to the unit ball in $\mathbb{C}^{2}$. 
\end{cor}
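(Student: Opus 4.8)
The plan is to reduce the corollary directly to Theorem~\ref{thm:main theorem}: I claim that a bounded pseudoconvex domain $D\subset\mathbb{C}^{2}$ with real analytic boundary is automatically of finite type. Granting this, the boundary is in particular $C^{\infty}$-smooth, so all the hypotheses of Theorem~\ref{thm:main theorem} are met, and the assumption that the Bergman metric of $D$ is K\"ahler--Einstein lets us apply that theorem verbatim to conclude that $D$ is biholomorphic to the unit ball. Thus the entire content of the corollary is the finite type statement, which is classical.

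For that statement I would argue as follows. By the theorem of Diederich and Fornaess on pseudoconvex domains with real analytic boundary, $\partial D$ contains no nontrivial germ of a complex analytic variety. On the other hand, D'Angelo's work on points of infinite type shows that a real analytic hypersurface has infinite type at a point $p$ precisely when some nonconstant germ of a holomorphic curve through $p$ lies in the hypersurface. Combining the two, every point of $\partial D$ is of finite D'Angelo type.

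It then remains to upgrade pointwise finiteness to a uniform bound over the compact hypersurface $\partial D$, i.e. to finite type of the domain itself. Here I would use that in complex dimension two the D'Angelo $1$-type agrees with the commutator (regular) type and is upper semicontinuous along $\partial D$; being integer valued and finite everywhere, it is therefore locally bounded, hence bounded on $\partial D$ by compactness. (Equivalently: the nested closed sets $\{p\in\partial D:\Delta_1(p)\ge k\}$ have empty intersection, so one of them is empty.) This shows $D$ is a smoothly bounded pseudoconvex domain of finite type, completing the reduction.

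The only point where any care is needed is precisely this last step: upper semicontinuity of the type can fail in higher dimensions, so it matters that we are working in $\mathbb{C}^{2}$, where it holds and delivers the uniform bound. Everything else is a concatenation of standard facts followed by an invocation of Theorem~\ref{thm:main theorem}, so I do not anticipate any genuine obstacle.
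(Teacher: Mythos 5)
Your reduction is exactly the one the paper intends: the corollary is stated as an immediate consequence of Theorem \prettyref{thm:main theorem} together with the classical fact (Diederich--Fornaess plus D'Angelo, with upper semicontinuity of the type in $\mathbb{C}^{2}$ giving the uniform bound on the compact boundary) that a bounded domain with real analytic boundary is of finite type. Your proof of that fact is correct and standard, so the proposal matches the paper's approach.
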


The article is organized as follows. We begin with some preliminaries
on the Bergman and Kähler-Einstein metrics in \prettyref{sec:Preliminaries}.
In \prettyref{sec:Bergman kernel der asymptotics}, we establish the
asymptotics for the Bergman kernel and its derivatives along a critically
tangent path. The leading term of the asymptotics is computed as well
in terms of a model Bergman kernel on the complex plane. Then we carry
out the requisite analysis of the model in \prettyref{sec: Analysis of the model}.
Finally we prove \prettyref{thm:main theorem} in \prettyref{sec:Main theorem proof}.

\medskip

\section{\label{sec:Preliminaries} Preliminaries}

In this section we begin with some requisite preliminaries on the
Bergman and Kähler-Einstein metrics.

Let $D\subset\mathbb{C}^{n}$ be a smoothly bounded domain. A boundary
defining function is a smooth function $\rho\in C^{\infty}\left(\bar{D}\right)$
satisfying $D=\left\{ \rho\left(z\right)<0\right\} \subset\mathbb{C}^{2}\textrm{ and }\left.d\rho\right|_{\partial D}\neq0.$
The CR and Levi-distributions on the boundary $X\coloneqq\partial D$
are defined via $T^{1,0}X=T^{1,0}\mathbb{C}^{2}\cap T_{\mathbb{C}}X$
and $HX\coloneqq\textrm{Re}\left[T^{1,0}X\oplus T^{0,1}X\right]$
respectively. The Levi form on the boundary is defined by 
\begin{align}
\mathscr{L} & \in\left(T^{1,0}X\right)^{*}\otimes\left(T^{0,1}X\right)^{*}\nonumber \\
\mathscr{L}\left(U,\bar{V}\right) & \coloneqq\partial\bar{\partial}\rho\left(U,\bar{V}\right)=-\overline{\partial}\rho\left(\left[U,\bar{V}\right]\right)\label{eq:Levi form}
\end{align}
for $U,V\in T^{1,0}X$. The domain is called \textit{strongly pseudoconvex}
if the Levi form is positive definite; and \textit{weakly pseudoconvex}
(or simply \textit{pseudoconvex}) if the Levi form is semi-definite.

We now recall the notion of finite type. There are two standard notions
of finite type (D'Angelo and Kohn/Bloom-Graham) of a smooth real hypersurface
$M$, and these happen to coincide in $\mathbb{C}^{2}.$ (The reader
is referred to \cite[page 252]{Baouendi-Ebenfelt-Rothschild-99} for more details).
The domain is called of \textit{finite type} (in the sense of Kohn/Bloom-Graham)
if the Levi-distribution $HX$ is bracket generating: $C^{\infty}\left(HX\right)$
generates $TX$ under the Lie bracket. In particular the \textit{type
of a point} on the boundary $x\in X=\partial D$ is the smallest integer
$r\left(x\right)$ such that $H_{x}X_{r\left(x\right)}=T_{x}X$, where
the subspaces $HX_{j}\subset TX$, $j=1,\ldots$ are inductively defined
by 
\begin{align}
HX_{1} & \coloneqq HX\nonumber \\
HX_{j+1} & \coloneqq HX+\left[HX_{j},HX\right],\quad\forall j\geq1.\label{eq:canonical flag}
\end{align}
In general, the function $x\mapsto r\left(x\right)$ is only upper
semi-continuous. The finite type hypothesis is then equivalent to
$r\coloneqq\max_{x\in X}r\left(x\right)<\infty.$ Note that the type
of a strongly pseudoconvex point $x$ is $r\left(x\right)=2$.

The Bergman projector of $D$ is the orthogonal projector 
\begin{equation}
K_{D}:L^{2}\left(D\right)\rightarrow L^{2}\left(D\right)\cap\mathcal{O}\left(D\right)\label{eq:Bergman kernel}
\end{equation}
from square integrable functions onto the closed subspace of square-integrable
holomorphic ones. Its Schwartz kernel, still denoted by $K_{D}\left(z,z'\right)\in L^{2}\left(D\times D\right),$
is called the Bergman kernel of $D$. It is well-known to be smooth
in the interior and positive along the diagonal. The Bergman metric
is the Kähler metric in the interior defined by 
\[
g_{\alpha\bar{\beta}}^{D}\coloneqq\partial_{\alpha}\partial_{\bar{\beta}}\ln K_{D}\left(z,z\right).
\]
Denote by $G=\det\left(g_{\alpha\bar{\beta}}^{D}\right)$ the determinant
of the above metric. The Ricci tensor of $g^{D}$ is by definition
$R_{\alpha\bar{\beta}}=-\partial_{\alpha}\partial_{\bar{\beta}}\ln G$.
The Bergman metric is always Kähler, and is further said to be \textit{Kähler-Einstein}
if $R_{\alpha\bar{\beta}}=cg_{\alpha\bar{\beta}}^{D}$ for some constant
$c$. Since $D$ is a bounded domain, the sign of $c$ must necessarily
be negative (cf. \cite[page 518]{Cheng-Yau-80}). The Bergman invariant
function is defined by $B\left(z\right)\coloneqq\frac{G\left(z\right)}{K_{D}\left(z,z\right)}$.
It follows from the transformation formula of the Bergman kernel that
the Bergman invariant function is invariant under biholomorphisms.

Next we briefly discuss the Kähler-Einstein metric. Recall the existence
of a complete Kähler-Einstein metric on $D\subset\mathbb{C}^{n}$
is governed by the following Dirichlet problem: 
\begin{align}
J\left(u\right)\coloneqq(-1)^{n}\det\begin{pmatrix}u & u_{\bar{\beta}}\\
u_{\alpha} & u_{\alpha\bar{\beta}}
\end{pmatrix} & =1\quad\textrm{in }D,\nonumber \\
u & =0\quad\text{on }\partial D\label{eq:Fefferman J equation}
\end{align}
with $u>0$ in $D$. Here $u_{\alpha}$ denotes $\partial_{z_{\alpha}}u$,
and likewise for $u_{\bar{\beta}}$ and $u_{\alpha\bar{\beta}}$.
The problem was first studied by Fefferman \cite{Fefferman74}, and
$J(\cdot)$ is often referred as Fefferman's complex Monge-Ampère
operator. Cheng and Yau \cite{Cheng-Yau-80} proved the existence
and uniqueness of an exact solution $u\in C^{\infty}(D)$ to \eqref{eq:Fefferman J equation},
on a smoothly bounded strongly pseudoconvex domain $D$. The function
$u$ is called the Cheng--Yau solution; and $-\partial\-{\partial}\log u$
gives rise to a complete Kähler-Einstein metric on $D$. Mok-Yau \cite{Mok-Yau1983}
further showed a bounded domain admits a complete Kähler-Einstein
metric if and only if it is a domain of holomorphy.

We next make some observations on the Monge-Ampère operator for later
applications. The left hand side of the first equation in \eqref{eq:Fefferman J equation}
can further be invariantly written as $J\left(u\right)=u^{n+1}\det\left[\partial\bar{\partial}\left(-\ln u\right)\right]$.
It may thus be computed in terms of any orthonormal frame $\{Z_{\alpha}\}_{\alpha=1}^{n}$
of $T^{1,0}\mathbb{C}^{n}$ as 
\begin{equation}
J\left(u\right)=\det\begin{pmatrix}u & \bar{Z}_{\beta}u\\
Z_{\alpha}u & Z_{\alpha}\bar{Z}_{\beta}u-\left[Z_{\alpha},\bar{Z}_{\beta}\right]^{0,1}u
\end{pmatrix}.\label{eq:Monge Ampere determinant}
\end{equation}
This can be proved using the identity 
\begin{equation}
\partial\bar{\partial}f\left(Z_{\alpha},\bar{Z}_{\beta}\right)=Z_{\alpha}\bar{Z}_{\beta}\left(f\right)-\bar{\partial}f\left(\left[Z_{\alpha},\bar{Z}_{\beta}\right]\right).\label{eq:ddbar calc.}
\end{equation}
Here the normality of $\{Z_{\alpha}\}_{\alpha=1}^{n}$ means each
of them has the same norm as $\partial_{z_{1}},\cdots,\partial_{z_{n}}$ with respect to the Euclidean metric.


The following proposition gives an equivalent condition for the Bergman
metric being Kähler-Einstein, which is easier to work with. The proof
is similar to \cite[Proposition 1.1]{Fu-Wong97} and \cite[Proposition 3.3]{HuLi}. 
\begin{prop}
\label{cor:Cor KE iff B fn const} Let $D\subset\mathbb{C}^{n},n\geq2,$
be a smoothly bounded pseudoconvex domain. Then its Bergman metric
$g^{D}$ is Kähler-Einstein if and only if the Bergman invariant function
is constant $B\left(z\right)\equiv\left(n+1\right)^{n}\frac{\pi^{n}}{n!}$.
This is also equivalent to the Bergman kernel $K_{D}$ satisfying
$J\left(K_{D}\right)=(-1)^{n}\frac{(n+1)^{n}\pi^{n}}{n!}K_{D}^{n+2}$. 
\end{prop}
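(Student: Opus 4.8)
The plan is to exploit three facts simultaneously: the biholomorphic invariance of the Bergman invariant function $B$, the Cheng--Yau theorem guaranteeing a complete K\"ahler-Einstein metric on $D$ (which is a domain of holomorphy, hence pseudoconvex), and the uniqueness of the negatively curved K\"ahler-Einstein metric. First I would recall that the Bergman metric, when it is K\"ahler-Einstein, must have $R_{\alpha\bar\beta}=c\,g^D_{\alpha\bar\beta}$ with $c<0$; by rescaling we may take $c=-(n+1)$. The key computation is to relate the Ricci form of $g^D$ to $B$. Since $g^D_{\alpha\bar\beta}=\partial_\alpha\partial_{\bar\beta}\ln K_D$, we have $\operatorname{Ric}(g^D)=-\partial\bar\partial\ln G=-\partial\bar\partial\ln\bigl(B\cdot K_D\bigr)=-\partial\bar\partial\ln B-(n+1)\,\omega^D+\ldots$, more precisely $-\partial\bar\partial\ln G = -\partial\bar\partial\ln B - \partial\bar\partial\ln K_D$. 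So the Einstein condition $\operatorname{Ric}(g^D)=-(n+1)g^D$ becomes $-\partial\bar\partial\ln B-\partial\bar\partial\ln K_D = -(n+1)\partial\bar\partial\ln K_D$, i.e.
\[
\partial\bar\partial\ln B = n\,\partial\bar\partial\ln K_D = n\,\omega^D,
\]
where $\omega^D$ is the (positive) Bergman K\"ahler form. This identity would be the technical heart.

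Next I would argue that $\partial\bar\partial\ln B = n\,\omega^D$ forces $B$ to be constant. The function $\ln B$ is a smooth function on $D$; the equation says its complex Hessian equals $n$ times a complete K\"ahler metric. One route: $B$ is bounded above and below on $D$ by Hörmander-type estimates / the localization of the Bergman kernel near a smooth boundary point (indeed $B$ extends continuously to $\bar D$ at strongly pseudoconvex points with the ball value, and is bounded in general on a smoothly bounded pseudoconvex domain — or one can simply invoke that $\ln B$ is a bounded plurisubharmonic-type function). A bounded function whose $\partial\bar\partial$ equals a complete K\"ahler metric cannot exist unless that metric is zero — but a cleaner argument is via the maximum principle applied along the complete metric $g^D$: the equation $\partial\bar\partial \ln B = n\omega^D$ means $\ln B$ is strictly plurisubharmonic with a definite lower bound on its Hessian, which (combined with boundedness) is impossible on a complete manifold by Yau's maximum principle (or Omori--Yau), unless we instead appeal to the structure directly. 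Alternatively, and more robustly, I would run the argument backwards: the Cheng--Yau solution $u$ gives a complete K\"ahler-Einstein metric $-\partial\bar\partial\ln u$ with $\operatorname{Ric}=-(n+1)$; if $g^D$ is also K\"ahler-Einstein with the same (normalized) constant, then by the uniqueness part of Cheng--Yau for complete negatively-curved K\"ahler-Einstein metrics we get $\partial\bar\partial\ln K_D = \partial\bar\partial\ln u^{-1}$ up to constants, hence $K_D = C\, u^{-(n+1)}$ for a constant $C>0$; plugging into $J(u)=1$ and using $J(u)=u^{n+1}\det[\partial\bar\partial(-\ln u)]$ together with $J(K_D) = K_D^{n+1}\det[\partial\bar\partial(-\ln K_D)]$ and the definition $B = G/K_D = \det(g^D_{\alpha\bar\beta})/K_D$ yields $B \equiv$ const; pinning down the constant to $(n+1)^n\pi^n/n!$ then comes from the ball, where $K_{B^n}$ and $g^{B^n}$ are explicit.

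Finally, for the last equivalence, I would translate "$B\equiv(n+1)^n\pi^n/n!$" into the Monge-Amp\`ere statement. By definition $B = \det(g^D_{\alpha\bar\beta})/K_D$, and $g^D_{\alpha\bar\beta}=\partial_\alpha\partial_{\bar\beta}\ln K_D$, so $\det(\partial_\alpha\partial_{\bar\beta}\ln K_D) = B\, K_D$. On the other hand the invariant form of Fefferman's operator gives $J(K_D) = K_D^{\,n+1}\det\bigl[\partial\bar\partial(-\ln K_D)\bigr] = (-1)^n K_D^{\,n+1}\det\bigl(\partial_\alpha\partial_{\bar\beta}\ln K_D\bigr) = (-1)^n K_D^{\,n+1}\cdot B\,K_D = (-1)^n B\, K_D^{\,n+2}$. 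Hence $B\equiv(n+1)^n\pi^n/n!$ is exactly equivalent to $J(K_D) = (-1)^n\frac{(n+1)^n\pi^n}{n!}K_D^{\,n+2}$, which closes the chain of equivalences.

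I expect the main obstacle to be the step showing that $\partial\bar\partial\ln B = n\,\omega^D$ implies $B$ is constant (equivalently, justifying the use of uniqueness of complete K\"ahler-Einstein metrics). One must be careful that $g^D$ is genuinely complete — this is classical for smoothly bounded pseudoconvex domains, but on a general finite-type domain one should cite it precisely — and that the regularity of $\ln B$ and the boundedness needed for the maximum-principle argument hold; the cleanest packaging is almost surely to invoke the Cheng--Yau existence-and-uniqueness theorem directly, as in \cite{Fu-Wong97} and \cite{HuLi}, rather than re-deriving a Liouville-type statement by hand.
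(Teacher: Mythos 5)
There is a genuine gap, and it sits exactly where you anticipated trouble, but it is more serious than your hedging suggests. First, you cannot ``rescale'' so that $c=-(n+1)$: the Bergman metric is a fixed, canonically defined metric, so its Einstein constant is a determined (a priori unknown) negative number, and pinning it down is a substantive step, not a normalization. Indeed your own computation gives $R_{\alpha\bar{\beta}}=-\partial_{\alpha}\partial_{\bar{\beta}}\ln B-g^{D}_{\alpha\bar{\beta}}$, so the Einstein condition $R_{\alpha\bar{\beta}}=c\,g^{D}_{\alpha\bar{\beta}}$ reads $\partial\bar{\partial}\ln B=-(1+c)\,g^{D}$; the desired conclusion $B\equiv\mathrm{const}$ forces $c=-1$ in this normalization, and your displayed identity $\partial\bar{\partial}\ln B=n\,\omega^{D}$ is incompatible with $B$ being constant. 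The paper determines $c=-1$ by boundary analysis: every smoothly bounded domain has a strongly pseudoconvex boundary point, near which one localizes the Bergman kernel (Engli\v{s}), applies Fefferman's expansion, and invokes Klembeck's theorem to see that the Ricci curvature of $g^{D}$ tends to $-1$ there. Your proposed substitutes --- Omori--Yau on a complete manifold, or uniqueness of the complete negatively curved K\"ahler--Einstein metric --- require completeness of $g^{D}$ and two-sided bounds on $B$ on all of $D$, neither of which is free on a general smoothly bounded pseudoconvex domain; and even granting uniqueness one only obtains $K_{D}=e^{h}u^{-\lambda}$ with $h$ pluriharmonic and $\lambda$ undetermined, so the same two difficulties reappear.

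Second, even once $c=-1$ is known, you only get that $\ln B$ is pluriharmonic, which is far from constant, and your proposal offers no mechanism to close this gap. The paper again uses the strongly pseudoconvex boundary piece $M_{0}$: one shows $B(z)\rightarrow(n+1)^{n}\pi^{n}/n!$ as $z\rightarrow M_{0}$, so on any analytic disk attached to $M_{0}$ the harmonic function $\ln B\circ f$ has constant boundary values and is therefore constant; a filling family of such disks makes $B$ constant on an open set, and real analyticity of $B$ plus connectedness of $D$ propagates this to all of $D$ and simultaneously identifies the constant. Your final paragraph, translating $B\equiv(n+1)^{n}\pi^{n}/n!$ into $J(K_{D})=(-1)^{n}\frac{(n+1)^{n}\pi^{n}}{n!}K_{D}^{n+2}$ via $J(u)=u^{n+1}\det\left[\partial\bar{\partial}(-\ln u)\right]$, is correct and coincides with the paper's argument for the last equivalence.
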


\begin{proof}
We start with the proof of the first assertion. Since the reverse
direction is trivial, we only need to prove the forward part. Assume
the Bergman metric of $D$ is Kähler-Einstein.

Recall a smoothly bounded domain in $\mathbb{C}^{n}$ always has a
strongly pseudoconvex boundary point. Therefore we can find a strongly
pseudoconvex open connected piece $M$ of $\partial D$. Fix $p\in M$.
Next pick a small smoothly bounded strongly pseudoconvex domain $D'\subseteq D$
such that $D'\cap O=D\cap O$ and $\partial D'\cap O=\partial D\cap O=:M_{0}\subseteq M$
for some small ball $O$ in $\mathbb{C}^{n}$ centered at $p$. 

Write $K_{D'}$ for the Bergman kernel of $D'$. Then by the localization
of the Bergman kernel on pseudoconvex domains at a strongly pseudoconvex
boundary point (cf. Theorem 4.2 in Engliš \cite{En01}), 
there is a smooth function $\Phi$ in a neighborhood of $D'\cup M_{0}$
such that 
\begin{equation}
K_{D}=K_{D'}+\Phi~\text{on}~D'.\label{eq:eqnphi}
\end{equation}
Note that $K_{D'}$ obeys Fefferman asymptotic expansion on $D'$
by \cite{Fefferman74}. Combining this with \prettyref{eq:eqnphi},
we see for any defining function $\rho$ of $D\cap O$ with $D\cap O=\{z\in O:\rho(z)<0\}$,
the Bergman kernel $K_{D}$ also has the Fefferman type expansion
in $D\cap O:$ 
\begin{equation}
K_{D}=\frac{\phi}{\rho^{n+1}}+\psi\log(-\rho)\quad\text{on}~D\cap O.\label{eq:eqnkz}
\end{equation}
Here $\phi$ and $\psi$ are smooth in a neighborhood of $D'\cup M_{0}$
with $\phi$ nowhere zero on $M_{0}.$

Then by \prettyref{eq:eqnkz} and (the proof of) Theorem 1 of Klembeck
\cite{Kl78}, the Bergman metric of $D$ is asymptotically of constant
holomorphic sectional curvature $\frac{-2}{n+1}$ as $z\in D\rightarrow M_{0}$.
Consequently, the Bergman metric of $D$ is asymptotically of constant
Ricci curvature $-1$ as $z\in D\rightarrow M_{0}$ (To prove the
latter fact, alternatively one can apply a similar argument as page
510 of Cheng-Yau \cite{Cheng-Yau-80}). Therefore by the Kähler-Einstein
assumption, we must have $R_{i\Ol{j}}=-g_{i\Ol{j}}.$ This yields
$\partial\Ol\bar{\partial}\log B\equiv0$ in $D$. That is, $\log B$
is pluriharmonic in $D.$

Furthermore, by \prettyref{eq:eqnphi} and a similar argument as in
the proof of Lemma 3.2 in \cite{HuLi}, we have $B(z)\rightarrow\frac{(n+1)^{n}\pi^{n}}{n!}$
as $z\rightarrow M_{0}.$ Now write $\Delta=\{z\in\mathbb{C}:|z|<1\}$
for the unit disk. Let $f:\Delta\rightarrow O$ be an analytic disk
attached to $M_{0}.$ That is, $f$ is holomorphic in $\Delta$ and
continuous in $\Ol{\Delta}$ with $f(\Delta)\subset O\cap D$ and
$f(\partial\Delta)\subset M_{0}.$ Then $\log B(f)$ is harmonic in
$\Delta,$ continuous up to $\partial\Delta,$ and takes constant
value $\log\frac{(n+1)^{n}\pi^{n}}{n!}$ on $\partial\Delta.$ This
implies $B$ takes the constant value $\frac{(n+1)^{n}\pi^{n}}{n!}$
on $f(\Delta).$ But since $M_{0}$ is strongly pseudoconvex, we can
find a family $\mathcal{F}$ of analytic disks such that $\cup_{f\in\mathcal{F}}f(\Delta)$
fills up an open subset $U$ of $O\cap D$(cf. \cite[Chapter 8]{Baouendi-Ebenfelt-Rothschild-99}).
Thus $B$ is constant on $U$. Since $B$ is real analytic and $D$
is connected, we see $B\equiv\frac{(n+1)^{n}\pi^{n}}{n!}$.


Finally, note the formula $J(u)=u^{n+1}\det\left(\partial\overline{\partial}(-\ln u)\right)$
yields that, $B\left(z\right)=c$ if and only if $J\left(K_{D}\right)=(-1)^{n}cK_{D}^{n+2}$.
Then the second assertion of the proposition follows immediately. 
\end{proof}

As one anonymous referee pointed out, the first assertion in Proposition \prettyref{cor:Cor KE iff B fn const} also follows from an easier rescaling argument
as in Rosay \cite{Rosay1979} and Kim-Yu \cite{KY96}. This would allow one to relax the $C^{\infty}$ boundary regularity assumption to $C^2-$smoothness in Proposition \prettyref{cor:Cor KE iff B fn const}. Of course we still need the $C^{\infty}$ boundary regularity assumption in our later argument.

\section{\label{sec:Bergman kernel der asymptotics} The Bergman kernel and
its derivatives}

To prove \prettyref{thm:main theorem}, we shall fundamentally use
the asymptotics of the Bergman kernel on pseudoconvex domains of finite
type. In this section, we first briefly recall some classical and
recent known work, and then prove new results for asymptotics of the
Bergman kernel.

In \prettyref{sec:Preliminaries}, we already made use of Fefferman's
Bergman kernel asymptotics in the strongly pseudoconvex case. Let
$D$ be a strongly pseudoconvex domain with a defining function $\rho\in C^{\infty}\left(\bar{D}\right)$.
Fefferman \cite{Fefferman74} showed that the Bergman kernel of the
domain $D$ has an asymptotic expansion 
\begin{equation}
K_{D}\left(z,z\right)=a\left(z\right)\rho{}^{-n-1}+b\left(z\right)\ln\left(-\rho\right)\label{eq:Fefferman expansion}
\end{equation}
for some functions $a\left(z\right),b\left(z\right)\in C^{\infty}\left(\bar{D}\right)$.

Recently, the asymptotics in \prettyref{eq:Fefferman expansion} were
extended to pseudoconvex domains of finite type in $\mathbb{C}^{2}$
by Hsiao and the first author \cite[Theorem 2]{HsiaoSavale-2022}.
They established the full asymptotic expansion of the Bergman kernel
described along transversal paths approaching the boundary. This is
not suitable for our proof of \prettyref{thm:main theorem}. We shall
need the asymptotic expansion of the Bergman kernel, and its derivatives,
along certain critically tangent paths (see \prettyref{sec:Introdunction}
and Remark \prettyref{rem:(Critical-tangency)}) approaching the boundary.
Besides, we also need information of the leading coefficient in the
asymptotics.

To state our result, now let $D\subset\mathbb{C}^{2}$ be a smoothly bounded pseudoconvex
domain of finite type. Fix $x^{*}\in X=\partial D$ on the boundary
of the domain of type $r=r\left(x^{*}\right)$. Let $U_{1},U_{2}\coloneqq JU_{1}\in C^{\infty}\left(HX\right)$
be two local orthonormal sections of the Levi distribution and $U_{3}\in C^{\infty}\left(TX\right)$,
$U_{3}\perp HX$ to be a unit normal to the Levi distribution. One
then extends $U_{1}$ to a local unit length vector field in the interior
of $D$. Set $U_{2}=JU_{1}$ to be an extension of $U_{2}$ to the
interior of $D$. Choose an extension of $U_{3}$ of unit length and
that is orthogonal to $U_{1},U_{2}$. Set $U_{0}=-JU_{3}$ (so that
$U_{3}=JU_{0}$). It is easy to see that $U_{0}$ is of unit length
and normal to the boundary $U_{0}\perp TX$ near $x^{*}\in X$. Replacing
$U_{3}$ by $-U_{3}$ if needed, we assume $U_{0}$ is outward-pointing
to $D$. This also gives a local boundary defining function $\rho$
via $U_{0}\left(\rho\right)=1$, $\left.\rho\right|_{X}=0$. Note
that the flow of the normal vector field $U_{0}$ also gives a locally
defined projection $\pi:D\rightarrow X=\partial D$ onto the boundary.
The pairs of vector fields define CR vector fields $Z=\frac{1}{2}\left(U_{1}-iU_{2}\right),W=\frac{1}{2}\left(U_{0}-iU_{3}\right)\in T^{1,0}\mathbb{C}^{2}$. 

Next, from a system of coordinates $x=\left(x_{1},x_{2},x_{3}\right)$
on the boundary centered at $x^{*}\in X$, we assign weights to local
functions and vector fields. For that, we define the weight of $x_{1},x_{2},x_{3}$ to be $1,1,r,$ respectively; and write $w(x)=w\left(x_{1},x_{2},x_{3}\right)\coloneqq\left(1,1,r\right)$ for the augmented weight vector. Then the weight of
a monomial $x^{\alpha}$, $\alpha=(\alpha_1, \alpha_2, \alpha_3)\in\mathbb{N}_{0}^{3}$, is $w.\alpha\coloneqq\alpha_{1}+\alpha_{2}+r\alpha_{3}$. In general,
the weight $w\left(f\right)$ of a function $f\in C^{\infty}\left(X\right)$
is the minimum weight of the monomials appearing in its Taylor
series at $x^{*}=0$. Finally, the weight $w\left(U\right)$ of a
smooth vector field $U=\sum_{j=1}^{3}f_{j}\partial_{x_{j}}$ is $w\left(U\right)\coloneqq\min\left\{ w\left(f_{1}\right)-1,w\left(f_{2}\right)-1,w\left(f_{3}\right)-r\right\} $.

In \cite[Prop. 3.2]{Christ89-embedding} (see also \cite[page 23]{Baouendi-Ebenfelt-Rothschild-99})
it was shown that a coordinate system $x=\left(x_{1},x_{2},x_{3}\right)$
on the boundary centered at $x^{*}$ may be chosen so that 
\begin{align}
\left.Z\right|_{X} & =\frac{1}{2}\left[\underbrace{\partial_{x_{1}}+\left(\partial_{x_{2}}p\right)\partial_{x_{3}}-i\left(\partial_{x_{2}}-\left(\partial_{x_{1}}p\right)\partial_{x_{3}}\right)}_{\eqqcolon Z_{0}}+R\right],\label{eq:Christ normal form}
\end{align}
where $p\left(x_{1},x_{2}\right)$ is a homogeneous, subharmonic (and
non-harmonic) real polynomial of degree and weight $r$. We note that
$r$ must be even. Besides, $p$ has no purely holomorphic or anti-holomorphic
terms in $z_{1}=x_{1}+ix_{2}$ in its Taylor expansion at $0$. Moreover,
$R=\sum_{j=1}^{3}r_{j}\left(x\right)\partial_{x_{j}}$ is a real vector
field of weight $w\left(R\right)\geq0$. 

The coordinate system $\left(x_{1},x_{2},x_{3}\right)$ on the boudary
is next extended to the interior of the domain by being constant in
the normal direction $U_{0}\left(x_{j}\right)=0$, $j=1,2,3$. Then
$x'\coloneqq\left(\rho,x_{1},x_{2},x_{3}\right)$ serve as coordinates
on the interior of the domain near $x^{*}$ in which $U_{0}=\partial_{\rho}$.
We also extend the notion of weights to the new coordinate system. We define the weight of $\rho, x_{1},x_{2},x_{3}$ to be $r,1,1,r,$ respectively;
and denote by $w'(x')=w'\left(\rho,x_{1},x_{2},x_{3}\right)\coloneqq\left(r, 1,1,r\right)$ the augmented weight vector. The weight of a monomial $\rho^{\alpha_{0}}x^{\alpha}$ is then defined
as $w\left(\rho^{\alpha_{0}}x^{\alpha}\right)=w'.\alpha'\coloneqq r\alpha_{0}+\alpha_{1}+\alpha_{2}+r\alpha_{3}$. In general, letting $f$ be a smooth function near
$x^{*}$, its weight
$w(f)$ equals the minimum weight of the monomials appearing in its Taylor
series at $x^{*}$. Finally, the weight $w\left(U\right)$
of a smooth vector field $U=f_{0}\partial_{\rho}+\sum_{j=1}^{3}f_{j}\partial_{x_{j}}$
is $w\left(U\right)\coloneqq\min\left\{ w\left(f_{0}\right)-r,w\left(f_{1}\right)-1,w\left(f_{2}\right)-1,w\left(f_{3}\right)-r\right\} $.
Note that one always has $w\left(U\right)\geq-r.$ Furthermore, if $f_{0}(0)=f_{3}(0)=0$, then $w\left(U\right)>-r$.

Below $O\left(k\right)$ denotes a vector field of weight $k$ or
higher. By a rescaling of the $x_{3}$ coordinate, and at the cost
of scaling the polynomial $p\left(x_{1},x_{2}\right)$ , we may also
arrange $\left.U_{3}\right|_{x^{*}=0}=\sigma \partial_{x_{3}}$, where $\sigma$ equals $1$ or $-1$. By  the pseudoconvexity condition \prettyref{eq:Levi form}, one can indeed
show that $\sigma$ must be $1$.  To see that, we compute
\begin{align*}
\left[Z,\bar{Z}\right]= & [-\Delta p\left(z_{1}\right)\frac{i}{2}\partial_{x_{3}}]+O\left(-1\right) \\ 
=& [-\Delta p\left(z_{1}\right)\frac{i}{2} \sigma U_3 ]+O\left(-1\right)
\end{align*}
Since $i U_3=\overline{W}-W$, we obtain
\begin{align*}
-\overline{\partial}\rho\left(\left[Z,\bar{Z}\right]\right)=& \frac{\sigma}{2} \Delta p\left(z_{1}\right) \overline{W} \left(\rho\right) + O\left(r-1\right)\\
=& \frac{\sigma}{4} \Delta p\left(z_{1}\right)  + O\left(r-1\right)
\end{align*}
The pseudoconvexity condition yields $\sigma >0,$ and thus $\sigma=1.$ Therefore we
have 
\begin{equation}
U_{3}=\partial_{x_{3}}+O\left(-r+1\right).\label{eq:arrangement for U3 =00003D00003D00003D000026 c}
\end{equation}
Next let $V\in C^{\infty}\left(HX\right)$ denote another locally
defined section of the Levi distribution. This defines a local \textit{tangential
path} approaching $x^{*}$ via 
\begin{align}
z\left(\epsilon\right)\coloneqq\left(\underbrace{e^{\epsilon V}x^{*}}_{=\pi\left(z\left(\epsilon\right)\right)},\underbrace{-\epsilon^{r}}_{=\rho\left(z\left(\epsilon\right)\right)}\right) & \in D,\quad\epsilon>0.\label{eq:tangential path}
\end{align}
Note the above path is indeed tangential to the boundary; its tangent
vector at $x^{*}$ is in the Levi-distribution $\left.\frac{dz}{d\epsilon}\right|_{\epsilon=0}=V_{x^{*}}\in H_{x^{*}}X$.
The order of tangency the path makes with the boundary is the type
of the point $r\left(x^{*}\right)$. Writing $V=\sum_{j=1}^{3}g_{j}\partial_{x_{j}}$,
we associate the section $V$ with a point 
\begin{align}
z_{1,V}\coloneqq\left(x_{1,V},x_{2,V}\right)=\left(g_{1}\left(0\right),g_{2}\left(0\right)\right)\in\mathbb{R}^{2}.\label{eq:vector field V in coords}
\end{align}

In the computation of the leading asymptotics of the Bergman kernel
$K_{D}$ (see \prettyref{eq:computation of the leading term} in \prettyref{thm:HS thm}),
one will further see the appearance of the \textit{model Bergman kernel
$B_{p}$ associated to the subharmonic polynomial $p$} in \prettyref{eq:Christ normal form}.
For the readers' convenience, we briefly recall the notion of model
Bergman kernel. For that, we consider the $L^{2}$ orthogonal projector
from $L^{2}\left(\mathbb{C}_{z_{1}}\right)$ to $H_{p}^{2}$. Here
\begin{equation}\label{eqn:hp2def}
H_{p}^{2}\coloneqq\left\{ f\in L^{2}\left(\mathbb{C}_{z_{1}}\right)|~\bar{\partial}_{p}f=0\right\} ;\quad\mathrm{and}~~\bar{\partial}_{p}\coloneqq\partial_{\bar{z}_{1}}+\partial_{\bar{z}_{1}}p.
\end{equation}
Then $B_{p}$ is defined to be the Schwartz kernel of this projector.
More discussion and analysis of the model Bergman kernel follows in
\prettyref{sec: Analysis of the model}.

We now state the necessary asymptotics result for the Bergman kernel
and its derivatives. Below $\partial^{\alpha'}=\left(\frac{1}{2}U_{0}\right)^{\alpha_{0}}Z^{\alpha_{1}}\bar{Z}^{\alpha_{2}}\left(\frac{1}{2}U_{3}\right)^{\alpha_{3}}$
denotes a mixed derivative along the respective vector fields for
$\alpha'=\left(\alpha_{0},\alpha_{1},\alpha_{2},\alpha_{3}\right)$
$\in\mathbb{N}_{0}^{4}$. 
\begin{thm}
\label{thm:HS thm} Let $D\subset\mathbb{C}^{2}$ be a smoothly bounded
pseudoconvex domain of finite type. For any point $x^{*}\in X=\partial D$
on the boundary, of type $r=r\left(x^{*}\right)$, the Bergman kernel
and its derivatives satisfy the following asymptotics for each $N\in\mathbb{N}$:
\begin{align}
 & \partial^{\alpha'}K_{D}\left(z,z\right)\quad\label{eq:HS equation}\\
 & =\sum_{j=0}^{N}\frac{1}{\left(-2\rho\right){}^{2+\frac{2+w'.\alpha'}{r}-\frac{1}{r}j}}a_{j}+\sum_{j=0}^{N}a_{j}'\left(-\rho\right){}^{j}\log\left(-\rho\right)+O\left(\left(-\rho\right)^{\frac{1}{r}\left(N+1\right)-2-\frac{2+w'.\alpha'}{r}}\right),\nonumber 
\end{align}
for some set of numbers $a_{j},a_{j}'$ as $z\rightarrow x^{*}$ tangentially
to the boundary along the path $\prettyref{eq:tangential path}$.

Furthermore, the leading term can be computed in terms of the model
Bergman kernel of the subharmonic polynomial $p$ as 
\begin{equation}
a_{0}=\delta_{0\alpha_{3}}.\left[\partial_{z_{1}}^{\alpha_{1}}\partial_{\bar{z}_{1}}^{\alpha_{2}}\underbrace{\left(\frac{1}{\pi}\int_{0}^{\infty}e^{-s}s^{1+\frac{2}{r}+\alpha_{0}}B_{p}\left(s^{\frac{1}{r}}z_{1}\right)ds\right)}_{\eqqcolon\tilde{B}_{p,\alpha_{0}}\left(z_{1}\right)}\right]_{z_{1}=z_{1,V}}.\label{eq:computation of the leading term}
\end{equation}
\end{thm}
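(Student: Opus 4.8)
The plan is to establish \prettyref{eq:HS equation} and \prettyref{eq:computation of the leading term} by a rescaling (``zoom-in'') argument at the boundary point $x^{*}$, following the microlocal/semiclassical analysis of the Bergman kernel for finite type domains in $\mathbb{C}^{2}$ from \cite{HsiaoSavale-2022}, but carried out along the critically tangent path \prettyref{eq:tangential path} rather than along a transversal path. First I would introduce the anisotropic dilation adapted to the weights $w'=(r,1,1,r)$: set $\rho=-\epsilon^{r}\tilde\rho$, $x_{1}=\epsilon\tilde x_{1}$, $x_{2}=\epsilon\tilde x_{2}$, $x_{3}=\epsilon^{r}\tilde x_{3}$, so that the point $z(\epsilon)$ of \prettyref{eq:tangential path} corresponds to the fixed model point $(\tilde\rho,\tilde x)=(1, x_{1,V}, x_{2,V}, 0)$ with $z_{1,V}$ as in \prettyref{eq:vector field V in coords}. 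Under this dilation the vector field $Z_{0}$ from \prettyref{eq:Christ normal form} is, up to the scalar $\epsilon^{-1}$, exactly the model CR operator $\bar\partial_{p}$-conjugate $\partial_{z_{1}}+\cdots$ attached to the homogeneous polynomial $p$, while the remainder $R$ of weight $\geq 0$ contributes only lower order terms in $\epsilon$; the normalizations \prettyref{eq:arrangement for U3 =00003D00003D00003D000026 c} and $U_{0}=\partial_{\rho}$ ensure the transversal/normal directions scale correctly. Then I would conjugate the Bergman projector $K_{D}$ by this dilation and by a unitary rescaling of $L^{2}$, obtaining a family of operators whose Schwartz kernels, restricted near the diagonal point of interest, admit a semiclassical expansion in the small parameter $\epsilon$.

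The second main step is to identify the leading operator in that expansion. Because $D$ is pseudoconvex of finite type, the Kohn Laplacian / $\bar\partial$-Neumann machinery (as used in \cite{HsiaoSavale-2022}; see also the heat-kernel or resolvent construction there) gives, after rescaling, a model problem on $\mathbb{C}_{z_{1}}\times\mathbb{R}_{x_{3}}\times\mathbb{R}_{\ge 0}$ whose fibrewise ($x_{3}$-Fourier-transformed) component at frequency $s>0$ is precisely the weighted Bergman projector onto $H_{p}^{2}$ from \prettyref{eqn:hp2def}, after the scaling $z_{1}\mapsto s^{1/r}z_{1}$ which matches the homogeneity $p(\lambda z_{1})=\lambda^{r}p(z_{1})$. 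Integrating the $e^{-\rho/\epsilon^{r}}$-type normal weight against the frequency variable produces the $\int_{0}^{\infty}e^{-s}s^{1+2/r+\alpha_{0}}\,ds$ Laplace-type integral; the exponent $1+2/r$ comes from the Jacobian of the anisotropic dilation ($r$ real normal-ish directions contributing, via the two $\mathbb{C}$-directions and the weight-$r$ direction, the shift $2/r$) together with the overall $\mathbb{C}^{2}$ volume factor, and the extra $\alpha_{0}$ is the number of $\tfrac12 U_{0}=\tfrac12\partial_{\rho}$ derivatives, each of which pulls down one power of the frequency. The $\delta_{0\alpha_{3}}$ factor in \prettyref{eq:computation of the leading term} reflects that each $\tfrac12 U_{3}=\tfrac12\partial_{x_{3}}+O(-r+1)$ derivative, being tangent to the ``extra'' direction of weight $r$, raises the weight $w'.\alpha'$ by $r$ and hence is subleading unless $\alpha_{3}=0$; equivalently, on the $x_{3}$-frequency side it is multiplication by the frequency which, after the $e^{-s}$ integration against a model kernel that does not see $x_{3}$, contributes only to lower-order coefficients. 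Tracking the powers of $\epsilon$ through: the diagonal Bergman kernel scales like $\epsilon^{-2r-4}$ (homogeneity of $K_{D}\sim\rho^{-n-1}=\rho^{-3}$ corrected for finite type), and $\partial^{\alpha'}$ of weight $w'.\alpha'$ adds $\epsilon^{-w'.\alpha'}$, giving the exponent $2+\tfrac{2+w'.\alpha'}{r}$ on $(-2\rho)^{-1}=(2\epsilon^{r}\tilde\rho)^{-1}$ in \prettyref{eq:HS equation}, with the $j$-sum being the subleading terms of the semiclassical expansion and the $\log(-\rho)$ terms arising exactly as in Fefferman's and Hsiao--Savale's expansions.

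For the $Z^{\alpha_{1}}\bar Z^{\alpha_{2}}$ derivatives I would argue that, under the rescaling, $Z|_{X}=\tfrac12(Z_{0}+R)$ becomes $\tfrac{\epsilon^{-1}}{2}$ times $\partial_{z_{1}}$ plus lower order, so $\alpha_{1}+\alpha_{2}$ such derivatives contribute $\epsilon^{-(\alpha_{1}+\alpha_{2})}$ and, at leading order, act as $\partial_{z_{1}}^{\alpha_{1}}\partial_{\bar z_{1}}^{\alpha_{2}}$ on the model kernel $\tilde B_{p,\alpha_{0}}(z_{1})$ evaluated at $z_{1,V}$ — here one must be slightly careful that the $\bar\partial_{p}$-twist in \prettyref{eqn:hp2def} is removed by the standard gauge transformation $f\mapsto e^{p}f$, so that the rescaled kernel of the weighted projector is literally $B_{p}$, and that differentiating in the second (barred) slot and restricting to the diagonal is what produces $\partial_{\bar z_{1}}^{\alpha_{2}}$. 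The remainder estimate $O((-\rho)^{\cdots})$ then follows from the uniformity of the expansion in \cite{HsiaoSavale-2022} together with standard elliptic/subelliptic estimates controlling the error terms after $N$ steps. \emph{The main obstacle} I anticipate is making the rescaling argument rigorous \emph{uniformly up to and including the diagonal along the tangential path}: the analysis of \cite{HsiaoSavale-2022} is stated along transversal approach, and along a critically tangent path one is zooming in at a point of $\mathbb{C}_{z_{1}}$ that is \emph{off-center} (namely $z_{1,V}\neq 0$), so one must verify that the off-diagonal and boundary decay of the rescaled Bergman kernel is strong enough that the model kernel $B_{p}$ — which is not explicit and whose mapping/decay properties are only developed in \prettyref{sec: Analysis of the model} — governs the limit, and that the error terms do not blow up as $z_{1,V}$ ranges over the relevant compact set. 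Controlling this will require the decay and smoothing estimates for $B_{p}$ established in the next section, applied with the scaling $z_{1}\mapsto s^{1/r}z_{1}$ and integrated against $e^{-s}s^{1+2/r+\alpha_{0}}$, which is exactly why the statement packages the leading coefficient through $\tilde B_{p,\alpha_{0}}$.
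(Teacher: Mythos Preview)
Your approach via direct anisotropic rescaling of the Bergman projector is viable in principle but takes a substantially longer route than the paper. The paper does \emph{not} redo the rescaling analysis along the tangential path. Instead, it imports from \cite{HsiaoSavale-2022} the Fourier-integral description
\[
K_{D}(z,w)=\frac{1}{\pi}\int_{0}^{\infty}e^{i\Phi(z,w)t}\,a(z,w,t)\,dt\quad(\textrm{mod }C^{\infty}),\qquad a\in\hat{S}_{\frac{1}{r},\mathrm{cl}}^{1+\frac{2}{r}},\quad \sigma_{L}(a)=B_{p},
\]
which is already valid on a full neighbourhood $U\times U$ of $(x^{*},x^{*})$ in $\bar D\times\bar D$ --- not merely along transversal paths. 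The proof then simply differentiates this representation, using the one calculus fact that a vector field $U$ of weight $w(U)$ maps $\hat{S}_{\frac{1}{r},\mathrm{cl}}^{m}\to\hat{S}_{\frac{1}{r},\mathrm{cl}}^{m-w(U)/r}$, together with the leading-weight expansions $(U_{0},Z,\bar Z,U_{3})=(\partial_{\rho},\,Z_{0}+O(0),\,\bar Z_{0}+O(0),\,\partial_{x_{3}}+O(-r+1))$. This yields an amplitude $a_{\alpha}\in\hat{S}_{\frac{1}{r},\mathrm{cl}}^{1+(2+w'.\alpha')/r,\,\alpha_{0}+\alpha_{3}}$ with principal part $(Z_{0}^{\alpha_{1}}\bar Z_{0}^{\alpha_{2}}B_{p})\,t^{1+(2+w'.\alpha')/r}$. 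Only at the \emph{final} step does the tangential path enter: one substitutes the coordinate expansion $(x_{1},x_{2},x_{3},\rho)=(\epsilon g_{1}(0)+O(\epsilon^{2}),\,\epsilon g_{2}(0)+O(\epsilon^{2}),\,O(\epsilon^{r}),\,-\epsilon^{r})$ into the oscillatory integral and reads off the asymptotics; the $\delta_{0\alpha_{3}}$ arises because $Z_{0}^{\alpha_{1}}\bar Z_{0}^{\alpha_{2}}$ saturates the leading symbol while any $U_{3}$ drops the order.

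The payoff of the paper's route is that the ``main obstacle'' you anticipate --- uniformity of the rescaled-projector analysis for off-center $z_{1,V}$ --- simply does not arise: the amplitude $a$ and its derivatives are controlled on the whole neighbourhood by the very definition of the symbol class $\hat{S}_{\frac{1}{r},\mathrm{cl}}^{m}$, and evaluating at $z_{1,V}\neq 0$ is just plugging a point into a classical symbol. Your rescaling-from-scratch programme would in effect re-derive \cite[Lemmas 17 and 20]{HsiaoSavale-2022} in a shifted frame, which is correct but redundant. (A minor slip: your claimed scaling $K_{D}\sim\epsilon^{-2r-4}$ is off; the leading power is $(-2\rho)^{-2-2/r}\sim\epsilon^{-2r-2}$, reflecting that for finite type the on-diagonal singularity is $\rho^{-2-2/r}$, not the strongly pseudoconvex $\rho^{-3}$.)
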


\begin{proof}
The proof is similar to \cite[Thm. 2]{HsiaoSavale-2022}. We shall
only point out the necessary modifications.

In \cite[Sec. 4 ]{HsiaoSavale-2022} the following space of symbols
$\hat{S}_{\frac{1}{r}}^{m}\left(\mathbb{C}^{2}\times\mathbb{C}^{2}\times\mathbb{R}_{t}\right)$,
$m\in\mathbb{R}$, in the variables $\left(\rho,x,\rho',y;t\right)\in\mathbb{C}_{z}^{2}\times\mathbb{C}_{w}^{2}\times\mathbb{R}_{t}$
was defined. This is the space of smooth functions satisfying the
symbolic estimates 
\begin{align}
{\scriptstyle \left|\partial_{\rho}^{\alpha_{0}}\partial_{\rho'}^{\beta_{0}}\partial_{x}^{\alpha}\partial_{y}^{\beta}\partial_{t}^{\gamma}a(\rho,x,\rho',y,t)\right|} & \leq C_{N,\alpha\beta\gamma}\left\langle t\right\rangle ^{m-\gamma+\frac{w'.\left(\alpha'+\beta'\right)}{r}}\frac{\left(1+\left|t^{\frac{1}{r}}\hat{x}\right|+\left|t^{\frac{1}{r}}\hat{y}\right|\right)^{N\left(\alpha',\beta',\gamma\right)}}{\left(1+\left|t^{\frac{1}{r}}\hat{x}-t^{\frac{1}{r}}\hat{y}\right|\right)^{-N}},\label{eq:symbolic estimates-1}
\end{align}
for each $\left(x,y,\rho,\rho',t,N\right)\in\mathbb{R}_{x,y}^{6}\times\mathbb{R}_{\rho,\rho'}^{2}\times\mathbb{R}_{t}\times\mathbb{N}$
and $\left(\alpha',\beta',\gamma\right)\in\mathbb{N}_{0}^{4}\times\mathbb{N}_{0}^{4}\times\mathbb{N}_{0}$
with $\alpha'=(\alpha_{0},\alpha),\beta'=(\beta_{0},\beta)$. Here
$N\left(\alpha',\beta',\gamma\right)\in\mathbb{N}$ depends only on
the given indices, $\left\langle t\right\rangle \coloneqq\sqrt{1+t^{2}}$
denotes the Japanese bracket while the notation $\hat{x}=\left(x_{1},x_{2}\right)$
denotes the first two coordinates of the tuple $x=\left(x_{1},x_{2},x_{3}\right)$.
Below $\hat{S}\left(\mathbb{R}_{\hat{x}}^{2}\times\mathbb{R}_{\hat{y}}^{2}\right)$
further denotes the space of restrictions of functions in $\hat{S}_{\frac{1}{r}}^{m}$
to $x_{3},y_{3},\rho,\rho'=0$ and $t=1$.

Next a generalization of this space is defined via 
\begin{equation}
\hat{S}_{\frac{1}{r}}^{m,k}\coloneqq\bigoplus_{p+q+p'+q'\leq k}\left(tx_{3}\right)^{p}\left(t\rho\right)^{q}\left(ty_{3}\right)^{p'}\left(t\rho'\right)^{q'}\hat{S}_{\frac{1}{r}}^{m},\label{eq:Smk symbol space def.}
\end{equation}
for each $\left(m,k\right)\in\mathbb{R}\times\mathbb{N}_{0}$. Finally,
the subspace of classical symbols $\hat{S}_{\frac{1}{r},{\rm cl\,}}^{m}\subset\hat{S}_{\frac{1}{r}}^{m}$
comprises of those symbols for which there exist $a_{jpp'qq'}\left(\hat{x},\hat{y}\right)\in\hat{S}\left(\mathbb{R}^{2}\times\mathbb{R}^{2}\right)$,
$j,p,p',q,q'\in\mathbb{N}_{0}$, such that the following belongs to
$\hat{S}_{\frac{1}{r}}^{m-\left(N+1\right)\frac{1}{r},N+1}$ for each
$N\in\mathbb{N}_{0}$: 
\begin{equation}
a\left(x,y,t\right)-\sum_{j=0}^{N}\sum_{p+q+p'+q'\leq j}t^{m-\frac{1}{r}j}\left(tx_{3}\right)^{p}\left(t\rho\right)^{q}\left(ty_{3}\right)^{p'}\left(t\rho'\right)^{q'}a_{jpp'qq'}\left(t^{\frac{1}{r}}\hat{x},t^{\frac{1}{r}}\hat{y}\right).\label{eq:asymbol}
\end{equation}
The space $\hat{S}_{\frac{1}{r},\textrm{cl}}^{m,k}$ is now defined
similarly to \prettyref{eq:Smk symbol space def.}. The principal
symbol of such an element $a\in\hat{S}_{\frac{1}{r},{\rm cl\,}}^{m}$
is defined to be the function 
\[
\sigma_{L}\left(a\right)\coloneqq a_{00000}\in\hat{S}\left(\mathbb{R}^{2}\times\mathbb{R}^{2}\right).
\]

Now, following the proof of \cite[Prop. 7.6]{Hsiao2010}, there exists
a smooth phase function $\Phi(z,w)$ defined locally on a neighbourhood
$U\times U$ of $\left(x^{*},x^{*}\right)$ in $\bar{D}\times\bar{D}\subset\mathbb{C}_{z}^{2}\times\mathbb{C}_{w}^{2}$
such that 
\begin{align}
 & \Phi(z,w)-x_{3}+y_{3}\label{eq:two variable phase}\\
 & =-i\rho\sqrt{-\sigma_{\triangle_{X}}(x,(0,0,1))}-i\rho'\sqrt{-\sigma_{\triangle_{X}}(y,(0,0,1))}+O(\left|\rho\right|^{2})+O(\left|\rho'\right|^{2}),\nonumber 
\end{align}
where $q_{0}(z,d_{z}\Phi)$ and $q_{0}(w,-\overline{d}_{w}\Phi)$
vanish to infinite order on $\{\rho=0\}$ and on $\{\rho'=0\}$, respectively.
Here $\triangle_{X}$ denotes the real Laplace operator on the boundary
$X=\partial D$ of the domain, while $q_{0}=\sigma\left(\Box_{f}\right)$
denotes the principal symbol of the complex Laplace-Beltrami operator
$\Box_{f}=\bar{\partial}_{f}^{*}\bar{\partial}+\bar{\partial}\bar{\partial}_{f}^{*}$
on the domain. The proofs of \cite[Lemma 17]{HsiaoSavale-2022} and
\cite[Lemma 20]{HsiaoSavale-2022} can be repeated to obtain the following
description for the Bergman kernel: for some $a\left(z,w,t\right)\in\hat{S}_{\frac{1}{r},{\rm cl\,}}^{1+\frac{2}{r}}\left(\mathbb{C}^{2}\times\mathbb{C}^{2}\times\mathbb{R}_{t}\right)$
one has 
\begin{align}
K_{D}\left(z,w\right)=\frac{1}{\pi}\int_{0}^{\infty}e^{i\Phi(z,w)t}a\left(z,w,t\right)dt\quad\left(\textrm{mod }C^{\infty}\left(\left(U\times U\right)\cap\left(\overline{D}\times\overline{D}\right)\right)\right)\label{eq:Bergman kernel description}
\end{align}
with $\sigma_{L}\left(a\right)=B_{p}$ being the model Bergman kernel
defined prior to the statement of this theorem.

We need to differentiate the last description \prettyref{eq:Bergman kernel description}.
For that, we adopt the notion of weights we defined before \prettyref{thm:HS thm}.
By construction, the chosen vector fields $\left(U_{0},Z,\overline{Z},U_{3}\right)$
have weights $\left(-r,-1,-1,-r\right)$ respectively. Furthermore,
the leading parts in their weight expansions are given by 
\begin{align}
\left(U_{0},Z,\overline{Z},U_{3}\right) & =\left(\partial_{\rho},Z_{0}+O\left(0\right),\bar{Z}_{0}+O\left(0\right),\partial_{x_{3}}+O\left(-r+1\right)\right),\label{eq:coords =00003D00003D00003D000026 vector fields along the path}
\end{align}
Here $Z_{0}\coloneqq\frac{1}{2}[\partial_{x_{1}}+\left(\partial_{x_{2}}p\right)\partial_{x_{3}}-i\left(\partial_{x_{2}}-\left(\partial_{x_{1}}p\right)\partial_{x_{3}}\right)]$
is now understood as a locally defined vector field in the interior
of the domain. Next we observe from definitions of the symbol spaces
\prettyref{eq:symbolic estimates-1}, \prettyref{eq:Smk symbol space def.}
that a vector field $U$ of weight $w\left(U\right)$ maps 
\begin{equation}
U:\hat{S}_{\frac{1}{r},{\rm cl\,}}^{m}\rightarrow\hat{S}_{\frac{1}{r},{\rm cl\,}}^{m-\frac{1}{r}w\left(U\right)}.\label{eq:weight differentiation properties of Smk}
\end{equation}
The equations \prettyref{eq:two variable phase}, \prettyref{eq:coords =00003D00003D00003D000026 vector fields along the path},
\prettyref{eq:weight differentiation properties of Smk} now allow
us to differentiate \prettyref{eq:Bergman kernel description} to
obtain: for some $a_{\alpha}\left(z;w,t\right)\in\hat{S}_{\frac{1}{r},{\rm cl\,}}^{1+\frac{2+w'.\alpha}{r},\alpha_{0}+\alpha_{3}}\left(\mathbb{C}^{2}\times\mathbb{C}^{2}\times\mathbb{R}_{t}\right)$
one has 
\begin{align}
\partial^{\alpha}K_{D}\left(z,z\right) & =\frac{1}{\pi}\int_{0}^{\infty}e^{i\Phi(z,z)t}a_{\alpha}\left(z,z,t\right)dt\quad\left(\textrm{mod }C^{\infty}\left(\left(U\times U\right)\cap\left(\overline{D}\times\overline{D}\right)\right)\right)\nonumber \\
\textrm{with }\quad a_{\alpha} & =\left(Z_{0}^{\alpha_{1}}\bar{Z}_{0}^{\alpha_{2}}B_{p}\right)t^{1+\frac{2+w'.\alpha}{r}}+\hat{S}_{\frac{1}{r},{\rm cl\,}}^{1+\frac{1+w'.\alpha}{r},\alpha_{0}+\alpha_{3}}.\label{eq:Bergman kernel der. description}
\end{align}

Recall the vector field $V=\sum_{j=1}^{3}g_{j}\partial_{x_{j}}\in C^{\infty}\left(HX\right)$
lies in the Levi distribution. By \prettyref{eq:Christ normal form},
its $\partial_{x_{3}}$-component function has weight $w\left(g_{3}\right)\geq r-1$.
Thus along the flow of $V$, and consequently along the path $z\left(\epsilon\right)$
in \prettyref{eq:tangential path}, the coordinate functions satisfy
\begin{equation}
\left(x_{1},x_{2},x_{3},\rho\right)=\left(\epsilon g_{1}\left(0\right)+O\left(\epsilon^{2}\right),\epsilon g_{2}\left(0\right)+O\left(\epsilon^{2}\right),O\left(\epsilon^{r}\right),-\epsilon^{r}\right).\label{eq:expansion of path}
\end{equation}
The last two equations \prettyref{eq:Bergman kernel der. description}
and \prettyref{eq:expansion of path} now combine to give the theorem. 
\end{proof}

Write $K_{D_{p}}\left(z_{1},z_{2}\right)$ for the Bergman kernel of
the model domain $$D_{p}\coloneqq\left\{ (z_1,z_2)| \textrm{Im}z_{2}>p\left(z_{1}\right)\right\} \subset\mathbb{C}^{2}$$
associated to the homogeneous polynomial $p$.
We note that the kernel $\tilde{B}_{p,0}$ appearing in \prettyref{eq:computation of the leading term}
above can be expressed as 
$$\tilde{B}_{p,0}\left(z_{1}\right)=K_{D_{p}}\left(z_{1},i\left[1+p\left(z_{1}\right)\right]\right).$$
This can be seen by using (\ref{eq:relation between Bergman projectors}), (\ref{eq:reln between Bergman kernels}) and \cite[Thm. 2(c)]{Haslinger98}).

\begin{rem}
\label{rem:(Critical-tangency)} (Critical tangency) The path $z\left(\epsilon\right)$
in \prettyref{eq:tangential path} is particularly chosen to be critically
tangent to the boundary. Namely its order of tangency with the boundary
is the type $r\left(x^{*}\right)$ of the boundary point $x^{*}\in\partial D$
that is being approached. This order of tangency is critical in the
sense that it is the maximum for which the expansion in \prettyref{eq:HS equation}
can be proved. As for a higher order of tangency (i.e., $\rho$ having
vanishing order higher than $r$ at $\epsilon=0$), the terms in the
symbolic expansion of $a_{\alpha}\in\hat{S}_{\frac{1}{r},{\rm cl\,}}^{1+\frac{2+w'.\alpha}{r},\alpha_{0}+\alpha_{3}}$
in \prettyref{eq:Bergman kernel der. description} become increasing
in order and not asymptotically summable. This means in particular,
the double summation in \prettyref{eq:asymbol} would be asymptotically
non-summable along the path. A critically tangent path is necessary
in our proof below since for such a path the leading coefficient \prettyref{eq:computation of the leading term}
picks up information of the model Bergman kernel at the arbitrary
tangent vector $V$. For a path tangent at a lesser order, the leading
coefficient only depends on the value of the model kernel $B_{p}$
at the origin. 
\end{rem}

\section{\label{sec: Analysis of the model} Analysis of the model kernel}

In \prettyref{sec:Bergman kernel der asymptotics}, we introduced
the model Bergman kernel $B_{p}$, corresponding to a subharmonic,
homogeneous polynomial $p\left(x_{1},x_{2}\right)$. As we see from
\prettyref{thm:HS thm}, it plays an important role in the asymptotics
of the Bergman kernel $K_{D}$ of $D$. To prepare for the proof of
\prettyref{thm:main theorem}, we need to further analyze this model
Bergman kernel $B_{p}$. For convenience, we will also write $p\left(x_{1},x_{2}\right)$
as $p\left(z_{1}\right)$, where $z_{1}=x_{1}+ix_{2}.$

\subsection{Expansion of the model kernel and first few coefficients}

First we will work out the expansion of the model Bergman kernel $B_{p}$,
and compute the values of the first few coefficients in the expansion.
As usual, for a smooth function $f$ on $\mathbb{C}_{z_{1}}$, we
write $f_{z_{1}}=\partial_{z_{1}}f=\frac{\partial f}{\partial z_{1}}$,
and likewise for $f_{\bar{z}_{1}}$ and $f_{z_{1}\bar{z}_{1}}$. 


\begin{prop}
\label{prop:model asymptotics} For any $z_{1}\in\mathbb{R}^{2}$,
with $\Delta p\left(z_{1}\right)\neq0$, the model Bergman kernel
on diagonal satisfies the asymptotics 
\begin{align}
\left[\partial_{z_{1}}^{\alpha_{1}}\partial_{\bar{z}_{1}}^{\alpha_{2}}B_{p}\right]\left(t^{\frac{1}{r}}z_{1}\right) & =\frac{t^{1-\frac{2+\left|\alpha\right|}{r}}}{2\pi}\partial_{z_{1}}^{\alpha_{1}}\partial_{\bar{z}_{1}}^{\alpha_{2}}\left[\sum_{j=0}^{N}b_{j}t^{-j}+O\left(t^{-N-1}\right)\right]\label{eq:proposition about model kernel}
\end{align}
for each $N\in\mathbb{N}$ as $t\rightarrow\infty$. Moreover, the
first four terms in the asymptotics are given by 
\begin{align*}
b_{0}=4q;\quad b_{1}=q^{-2}Q;\quad b_{2}=\frac{1}{6}\partial_{z_{1}}\partial_{\bar{z}_{1}}\left[q^{-3}Q\right];
\end{align*}
\begin{align}
b_{3}=\frac{q}{48}\left\{ [q^{-1}\partial_{z_{1}}\partial_{\bar{z}_{1}}]^{2}q^{-3}Q-q^{-4}Q\left[\partial_{z_{1}}\partial_{\bar{z}_{1}}\right]q^{-3}Q-q^{-1}[\partial_{\bar{z}_{1}}\left(q^{-3}Q\right)][\partial_{z_{1}}\left(q^{-3}Q\right)]\right\} ;\label{eq:computation of first four Bergman coefficients}
\end{align}
where $q\coloneqq\frac{1}{4}\Delta p=p_{z_{1}\bar{z}_{1}}$ and $Q\coloneqq qq_{z_{1}\bar{z}_{1}}-q_{z_{1}}q_{\bar{z}_{1}}$
are defined in terms of the polynomial $p$. 
\end{prop}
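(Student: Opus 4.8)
The plan is to reduce the statement, via the homogeneity of $p$, to a semiclassical expansion of a weighted Bergman kernel on $\mathbb{C}$, and then to invoke a Tian--Yau--Zelditch type asymptotic expansion. \emph{Reduction.} Since $\bar\partial_{p}f=e^{-p}\partial_{\bar z_{1}}(e^{p}f)$, the map $g\mapsto e^{-p}g$ is an isometry from the weighted Bergman space $A^{2}\bigl(\mathbb{C},e^{-2p}\,dA\bigr)$ onto $H_{p}^{2}$, so $B_{p}(z_{1},w_{1})=e^{-p(z_{1})-p(w_{1})}K_{e^{-2p}}(z_{1},w_{1})$, where $K_{e^{-2p}}$ denotes the reproducing kernel of that weighted space. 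Because $p$ is homogeneous of degree $r$, the dilation $z_{1}\mapsto t^{1/r}z_{1}$ satisfies $p(t^{1/r}z_{1})=t\,p(z_{1})$, and the transformation law of weighted Bergman kernels gives $K_{e^{-2p}}(t^{1/r}z_{1},t^{1/r}z_{1})=t^{-2/r}K_{e^{-2tp}}(z_{1},z_{1})$; hence
\[
B_{p}\bigl(t^{1/r}z_{1},t^{1/r}z_{1}\bigr)=t^{-2/r}e^{-2tp(z_{1})}K_{e^{-2tp}}(z_{1},z_{1}).
\]
Combining this with the chain rule $[\partial_{z_{1}}^{\alpha_{1}}\partial_{\bar z_{1}}^{\alpha_{2}}B_{p}](t^{1/r}z_{1})=t^{-|\alpha|/r}\,\partial_{z_{1}}^{\alpha_{1}}\partial_{\bar z_{1}}^{\alpha_{2}}\bigl(B_{p}(t^{1/r}z_{1},t^{1/r}z_{1})\bigr)$, the proposition becomes equivalent to the semiclassical expansion
\[
e^{-2tp(z_{1})}K_{e^{-2tp}}(z_{1},z_{1})=\frac{t}{2\pi}\Bigl(b_{0}+b_{1}t^{-1}+\cdots+b_{N}t^{-N}+O(t^{-N-1})\Bigr),
\]
valid, together with all $z_{1}$-derivatives and locally uniformly, on the open dense set $\{\Delta p\neq0\}$ (note $\Delta p$ is a nonzero polynomial, $p$ being subharmonic and non-harmonic, so there $q=\tfrac14\Delta p>0$).

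\emph{The expansion.} The weight $e^{-2tp}$ is the $t$-th power of the fixed, positively curved weight $e^{-2p}$ on $\mathbb{C}$, so the displayed asymptotics are exactly the near-diagonal Bergman kernel expansion for high tensor powers. In this non-compact setting it follows from the standard spectral-gap-plus-rescaling argument: one uses the subellipticity of the model $\bar\partial_{p}$-complex from \cite{HsiaoSavale-2022} to get the off-diagonal (Agmon) decay, rescales $z_{1}\mapsto z_{1}^{0}+t^{-1/2}w$ to turn the problem into a perturbation of the Bargmann model of curvature $q(z_{1}^{0})$, and solves the resulting formal expansion recursively. This simultaneously produces the coefficients $b_{j}$ as local differential polynomials in $p$ (with $q^{-1}$ allowed) and the uniformity needed to differentiate term by term in $z_{1}$. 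The leading term is read off from the exact Bargmann kernel $K(z,w)=\tfrac{2q}{\pi}e^{2qz\bar w}$ of the weight $e^{-2q|z|^{2}}$, for which $e^{-2q|z|^{2}}K(z,z)=\tfrac{2q}{\pi}=\tfrac{4q}{2\pi}$, giving $b_{0}=4q$.

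\emph{The coefficients and the main obstacle.} To extract $b_{1},b_{2},b_{3}$ I would fix $z_{1}^{0}$ with $q(z_{1}^{0})>0$, translate it to the origin, and absorb the pluriharmonic part of the Taylor expansion of $p$ by the gauge transformation $g\mapsto e^{-th}g$ (which does not change the value of the kernel on the diagonal at the origin), reducing to a weight $\tilde p(w)=q|w|^{2}+O(|w|^{3})$; the recursion of the previous step then produces $b_{1},b_{2},b_{3}$ as explicit polynomials in the Taylor coefficients of $\tilde p$ at $0$, which must reassemble --- by this gauge invariance of the diagonal kernel --- into covariant combinations of $q$ and $Q=qq_{z_{1}\bar z_{1}}-q_{z_{1}}q_{\bar z_{1}}$ (equivalently, into the metric Laplacian $\tfrac{4}{q}\partial_{z_{1}}\partial_{\bar z_{1}}$ applied repeatedly to the Gauss curvature $-2Q/q^{3}$ of $q\,|dz_{1}|^{2}$), which yields the stated formulas; in particular one checks $b_{1}=\partial_{z_{1}}\partial_{\bar z_{1}}\log q=q^{-2}Q$. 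Alternatively, one may simply quote the known universal Tian--Yau--Zelditch coefficients in complex dimension one and translate the $e^{-2\phi}$-versus-$e^{-\phi}$ and $t$-versus-$N$ normalization conventions. The principal difficulty lies exactly here: obtaining $b_{2}$ and $b_{3}$ in closed form is a lengthy fourth-order perturbative computation, as the complexity of the displayed expression for $b_{3}$ makes clear, and one must take care that the weighted-kernel expansion is genuinely $C^{\infty}$ in $z_{1}$ up to the order of the derivatives $\partial_{z_{1}}^{\alpha_{1}}\partial_{\bar z_{1}}^{\alpha_{2}}$ being applied.
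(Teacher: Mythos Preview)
Your approach is essentially the same as the paper's: reduce via the isometry $g\mapsto e^{-p}g$ and the homogeneity of $p$ to the semiclassical expansion of the weighted kernel $e^{-2tp}K_{e^{-2tp}}$, then invoke the standard Tian--Yau--Zelditch expansion (the paper cites \cite{Hsiao-Marinescu-2014} together with the spectral gap from \cite{HsiaoSavale-2022}). The one place where the paper is more efficient is the coefficient computation you flag as the principal difficulty: rather than carrying out the fourth-order perturbation by hand, the paper simply specializes the explicit universal formulas of Engli\v{s} \cite[(6.2) and Theorem~9]{Englis2000} to complex dimension one, reading off $b_0,\dots,b_3$ from the curvature invariants $R_{1\bar 1 1\bar 1}=q^{-1}Q$, $\mathrm{Ric}_{1\bar 1}=q^{-2}Q$, $R=q^{-3}Q$ and the Laplacian $q^{-1}\partial_{z_1}\partial_{\bar z_1}$.
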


\begin{proof}
The proof uses some rescaling arguments. Following \cite[Sec. 4.1]{Marinescu-Savale18},
we introduce the rescaling operator 
$\delta_{t^{-\frac{1}{r}}}:\mathbb{C}\rightarrow\mathbb{C}$ given
by 
$\delta_{t^{-\frac{1}{r}}}\left(z_{1}\right)\coloneqq t^{-\frac{1}{r}}z_{1},t>0.$
Recall when introducing $B_{p},$ we defined $\bar{\partial}_{p}\coloneqq\partial_{\bar{z}_{1}}+\partial_{\bar{z}_{1}}p.$
The corresponding Kodaira Laplacian on functions 
$\Box_{p}=\bar{\partial}_{p}^{*}\bar{\partial}_{p}$ then gets rescaled
to the operator 
\[
\left(\delta_{t^{-\frac{1}{r}}}\right)_{*}\Box_{p}=t^{-\frac{2}{r}}\Box_{t}
\]
where $\Box_{t}\coloneqq\bar{\partial}_{t}^{*}\bar{\partial}_{t}$,
and $\bar{\partial}_{t}\coloneqq\partial_{\bar{z}_{1}}+t\left(\partial_{\bar{z}_{1}}p\right)$.

Recall right before Theorem \ref{thm:HS thm}, we defined $B_p$ as the model Bergman kernel
associated to the subharmonic polynomial $p$. We now pause to introduce two more Bergman type kernel functions that
are defined similarly as $B_{p}.$ 

Firstly denote by the shorthand
$\mathcal{B}_{t}\coloneqq B_{tp}$, $t>0$, the model Bergman kernel
associated to the rescaled homogeneous polynomial $tp.$ Next define
the weighted space $L_{tp}^{2}\left(\mathbb{C}_{z_{1}}\right)\coloneqq\left\{ f|e^{-tp}f\in L^{2}\left(\mathbb{C}_{z_{1}}\right)\right\} $,
and denote by $\mathcal{O}\left(\mathbb{C}_{z_{1}}\right)$ the space
of entire functions on $\mathbb{C}_{z_{1}}.$ The $L^{2}$ orthogonal
projector $\mathcal{B}_{t}^{p}$ from $L_{tp}^{2}\left(\mathbb{C}_{z_{1}}\right)$
to $L_{tp}^{2}\left(\mathbb{C}_{z_{1}}\right)\cap\mathcal{O}\left(\mathbb{C}_{z_{1}}\right)$
is then seen to be related to the kernel $B_{t}$ by the relation
\begin{equation}
	\mathcal{B}_{t}\left(z_{1},z_{1}'\right)=e^{-tp\left(z_{1}\right)-tp\left(z_{1}'\right)}\mathcal{B}_{t}^{p}\left(z_{1},z_{1}'\right).\label{eq:relation between Bergman projectors}
\end{equation}
This follows from the simple observation that multiplication by $e^{-tp}$ is
a isomorphism from $L_{tp}^{2}\left(\mathbb{C}_{z_{1}}\right)$ to
$L^{2}\left(\mathbb{C}_{z_{1}}\right)$, and from $L_{tp}^{2}\left(\mathbb{C}_{z_{1}}\right)\cap\mathcal{O}\left(\mathbb{C}_{z_{1}}\right)$
to $H_{tp}^{2}$, respectively. Here recall $H_{tp}^{2}$ is as defined in (\ref{eqn:hp2def}) with $p$ replaced by $tp$.

Moreover, $\mathcal{B}_{t}$ can be equivalently understood as the
Bergman projector for the trivial holomorphic line bundle on $\mathbb{C}$
with Hermitian metric $h_{t}=e^{-tp}$. The curvature of this metric
is $t\underbrace{\left(2\partial_{z_{1}}\partial_{\bar{z}_{1}}p\right)}_{=\frac{1}{2}\Delta p}dz_{1}\wedge d\bar{z}_{1}$.
Its eigenvalue is $\Delta p$. In \cite[Thm. 14]{HsiaoSavale-2022},
$\mathcal{B}_{t}$  is related to $\mathcal{B}_{1}=B_p$ via 
\begin{equation}
	B_{p}\left(t^{\frac{1}{r}}z_{1},t^{\frac{1}{r}}z_{1}'\right)=t^{-\frac{2}{r}}\mathcal{B}_{t}\left(z_{1},z_{1}'\right).\label{eq:reln between Bergman kernels}
\end{equation}
Furthermore, in its proof the following spectral gap property for
$\Box_{t}$ 
was observed 
\[
\textrm{Spec}\left(\Box_{t}\right)\subset\left\{ 0\right\} \cup\left[c_{1}t^{2/r}-c_{2},\infty\right)
\]
for some $c_{1},c_{2}>0$.

At a point $z_{1}\in\mathbb{C}$, where $\Delta p\left(z_{1}\right)\neq0$,
the asymptotics of $\mathcal{B}_{t}\left(z_{1},z_{1}\right)$ as $t\rightarrow\infty$
are thus the standard asymptotics for the Bergman kernel on tensor
powers of a positive line bundle (cf. \cite[Thm. 1.6]{Hsiao-Marinescu-2014}).
There is an asymptotic expansion 
\begin{equation}
	\partial_{z_{1}}^{\alpha_{1}}\partial_{\bar{z}_{1}}^{\alpha_{2}}\mathcal{B}_{t}\left(z_{1}\right)=\frac{t}{2\pi}\partial_{z_{1}}^{\alpha_{1}}\partial_{\bar{z}_{1}}^{\alpha_{2}}\left[\sum_{j=0}^{N}b_{j}t^{-j}+O\left(t^{-N-1}\right)\right]\label{eq:standard derivative asymptotics}
\end{equation}
for each $N\in\mathbb{N}$ as $t\rightarrow\infty$. The last two
equations \prettyref{eq:reln between Bergman kernels} and \prettyref{eq:standard derivative asymptotics}
combine to prove \prettyref{eq:proposition about model kernel}.

It remains to compute the first four coefficients in \prettyref{eq:standard derivative asymptotics}.
For that we will make use of \prettyref{eq:relation between Bergman projectors},
by which it suffices to find the corresponding coefficients in the
expansion of $\mathcal{B}_{t}^{p}.$ The computations for the latter
can be found in \cite[(6.2) and Theorem 9]{Englis2000}. In order
to see the specialization of the formulas therein to the special case
here, 
we note the Kähler metric $g=\partial\bar{\partial}p$ with potential
$p$ has component $g_{1\bar{1}}=q=\partial_{z_{1}}\partial_{\bar{z}_{1}}p$.
The only non-zero Christoffel symbols are 
$\overline{\Gamma_{11}^{1}}=\Gamma_{\bar{1}\bar{1}}^{\bar{1}}=q^{-1}\partial_{\bar{z}_{1}}q.$
Furthermore, the only non-zero components of the Riemannian, Ricci
and scalar curvatures respectively are given by the following. Here
we follow the convention of curvatures in \cite[pp. 6]{Englis2000},
which may differ from that of some other papers by a negative sign.
\begin{align*}
	R_{1\bar{1}1\bar{1}}=\partial_{z_{1}}\partial_{\bar{z}_{1}}q-q^{-1}\left(\partial_{z_{1}}q\right)\left(\partial_{\bar{z}_{1}}q\right)=q^{-1}Q;\quad\textrm{Ric}_{1\bar{1}}=q^{-2}Q;\quad R=q^{-3}Q.
\end{align*}
The corresponding Laplace operator $L_{1}$ of \cite[(2.10)]{Englis2000}
in our special context is given by $L_{1}=q^{-1}\partial_{z_{1}}\partial_{\bar{z}_{1}}$.
We now bring these specializations into \cite[(6.2) and Theorem 9]{Englis2000}
to obtain the values of the coefficients $b_{0}$, $b_{1}$, $b_{2}$
and $b_{3}$. For instance,  we note that the tensors appearing the
computation of $b_{3}$ are the following ones, in the notation of \cite[Theorem 9]{Englis2000}: $\sigma_1=\cdots=\sigma_7=\sigma_{15}=q^{-9}Q^3, \sigma_{8}=\sigma_{9}=\sigma_{10}=q^{-4}Q\left[\partial_{z_{1}}\partial_{\bar{z}_{1}}\right]q^{-3}Q$,
$\sigma_{12}=\sigma_{13}=q^{-1}\left[\partial_{\bar{z}_{1}}\left(q^{-3}Q\right)\right]\left[\partial_{z_{1}}\left(q^{-3}Q\right)\right]$
and $\sigma_{14}=\left[q^{-1}\partial_{z_{1}}\partial_{\bar{z}_{1}}\right]^{2}q^{-3}Q$.
\end{proof}
\begin{rem}
Although we computed the values of $b_{0},\cdots,b_{3}$ in Proposition
\prettyref{prop:model asymptotics}, we will only use $b_{3}$ in
the proof of \prettyref{thm:main theorem}. 
\end{rem}

\subsection{Models with vanishing expansion coefficients}

Having shown that the model kernel $B_{p}\left(t^{\frac{1}{r}}z_{1}\right)$
admits an asymptotic expansion at $t\rightarrow\infty$, we ask when
the terms of the asymptotic expansion are eventually zero, or in other
words, $b_{j}=0$ for $j$ sufficiently large. This is relevant to
our theorem below. We prove the following somewhat surprising result
which shows the vanishing of the third coefficient is already restrictive.
As above, let $p\left(x_{1},x_{2}\right)$ be a subharmonic and non-harmonic
homogeneous polynomial of degree $r$. 
\begin{thm}
\label{thm: thm about vanish coeff.} Suppose the third term $b_{3}$
vanishes in the asymptotic expansion \prettyref{eq:proposition about model kernel}
of the model kernel $B_{p}$ corresponding to $p$. Then there exists
some real number $c_{0}>0$ such that $q=c_{0}\left(z_{1}\bar{z}_{1}\right)^{\frac{r}{2}-1}.$
Here as before, $q\coloneqq\frac{1}{4}\Delta p$. 
\end{thm}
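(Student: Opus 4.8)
The plan is to convert the vanishing of $b_{3}$ into a fourth order ODE on the unit circle and then use periodicity to force rigidity. First I would simplify the bracket in the formula for $b_{3}$. Writing $R:=q^{-3}Q$ and $L:=q^{-1}\partial_{z_{1}}\partial_{\bar{z}_{1}}$, and using $q^{-4}Q=q^{-1}R$ together with the identity $L(R^{2})=2R\,LR+2q^{-1}(\partial_{z_{1}}R)(\partial_{\bar{z}_{1}}R)$, the bracket collapses to $L^{2}R-\tfrac12 L(R^{2})=L\bigl(LR-\tfrac12 R^{2}\bigr)$, so that $b_{3}=\tfrac1{48}\partial_{z_{1}}\partial_{\bar{z}_{1}}\bigl(LR-\tfrac12 R^{2}\bigr)$, and $b_{3}\equiv0$ is equivalent to $S:=LR-\tfrac12 R^{2}$ being harmonic wherever $q\neq0$. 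Then I would pass to polar coordinates $z_{1}=\rho e^{i\theta}$. Since $p$ is homogeneous of degree $r$, $q$ is homogeneous of degree $r-2$ and $R$ of degree $-r$; using $\partial_{z_{1}}\partial_{\bar{z}_{1}}(\rho^{d}f(\theta))=\tfrac14\rho^{d-2}(f''+d^{2}f)$ and $\partial_{z_{1}}\partial_{\bar{z}_{1}}\log q=\tfrac14\rho^{-2}\phi''$, where $\phi:=\log\tilde{q}$ and $\tilde{q}(\theta):=q(e^{i\theta})$, one finds $R=\rho^{-r}\psi(\theta)$ with $\psi=\tfrac{\phi''}{4\tilde{q}}$ and $S=\rho^{-2r}G(\theta)$ with $G=\tfrac1{4\tilde{q}}(\psi''+r^{2}\psi)-\tfrac12\psi^{2}$. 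Harmonicity of $S$ then reads $G''+4r^{2}G=0$ on each arc where $\tilde{q}>0$, whence $G=A\cos2r\theta+B\sin2r\theta$ there.

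Next I would clear denominators. With $N:=\tilde{q}\tilde{q}''-(\tilde{q}')^{2}$ one has $\psi=N/(4\tilde{q}^{3})$, and a short computation gives $16\tilde{q}^{6}G=M$ where $M:=N''\tilde{q}^{2}-6N'\tilde{q}'\tilde{q}+12N(\tilde{q}')^{2}-3N\tilde{q}''\tilde{q}+r^{2}\tilde{q}^{2}N-\tfrac12 N^{2}$ is a genuine trigonometric polynomial, of degree $4$ in $\tilde{q}$. Since $\tilde{q}$ has only even frequencies lying in $[-(r-2),r-2]$, its largest frequency $m$ is even and $\le r-2$; if $m=0$ then $\tilde{q}$ is a positive constant and we are done, so assume $m\ge2$. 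Two trigonometric polynomials agreeing on an arc agree everywhere, so $M=16\tilde{q}^{6}(A\cos2r\theta+B\sin2r\theta)$ holds identically, with one pair $(A,B)$. The left side has all frequencies bounded by $4m$, while the coefficient of $e^{i(6m+2r)\theta}$ on the right is $8\,\hat{q}_{m}^{6}(A-iB)$ with $\hat{q}_{m}\neq0$ and $6m+2r>4m$; since $A,B$ are real this forces $A=B=0$, hence $M\equiv0$ and $G\equiv0$. Moreover, if $\tilde{q}$ had a zero of order $2\ell\ge2$ at some $\theta_{0}$, a leading‑order expansion shows $M$ vanishes there only to order $8\ell-4$, with leading coefficient proportional to $\ell(4\ell+3)(\ell+2)\neq0$; this contradicts $M\equiv0$. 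Therefore $\tilde{q}>0$ everywhere and $\phi=\log\tilde{q}$ is smooth and $2\pi$-periodic.

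Finally, $G\equiv0$ reads $\psi''+r^{2}\psi=2\tilde{q}\psi^{2}$; substituting $\psi=\tfrac{\phi''}{4}e^{-\phi}$ and multiplying through by $4e^{\phi}$ turns this into
\[
\phi''''-2\phi'\phi'''-\tfrac32(\phi'')^{2}+(\phi')^{2}\phi''+r^{2}\phi''=0.
\]
Setting $w:=\phi'$, the left side equals $\tfrac{d}{d\theta}\bigl[w''-2ww'+\tfrac13 w^{3}+r^{2}w\bigr]+\tfrac12(w')^{2}$, so the periodic function $J:=w''-2ww'+\tfrac13 w^{3}+r^{2}w$ satisfies $J'=-\tfrac12(w')^{2}\le0$. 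Integrating over $[0,2\pi]$ forces $w'\equiv0$, hence $\phi''\equiv0$, hence (by periodicity) $\phi$ is constant, i.e. $\tilde{q}\equiv c_{0}>0$ and $q=c_{0}\rho^{r-2}=c_{0}(z_{1}\bar{z}_{1})^{r/2-1}$.

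The main obstacle is this last step: recognizing that the fourth order ODE produced by $b_{3}\equiv0$ carries the monotone‑modulo‑periodicity quantity $J$, which is what actually yields rigidity. The preceding steps—simplifying the bracket, the polar reduction, and (especially) verifying that the leading coefficient of $M$ at a would‑be zero of $\tilde{q}$ never vanishes—are routine but must be carried out carefully.
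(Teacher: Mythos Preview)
Your argument is correct and genuinely different from the paper's. The paper proceeds algebraically: it complexifies $q(z,\bar z)$ to $q(z,\zeta)$, assumes $q\neq c_0(z\zeta)^{r/2-1}$, and then extracts a nontrivial linear factor $(z+a\zeta)^{k}$ with $a\neq0$. Tracking the ``order of vanishing along this factor'' through each of the three terms in the formula for $b_3$ via simple bookkeeping lemmas, it finds that the combined expression has type $D_a(-3k-6,T)$ with $T\neq0$, contradicting $b_3\equiv0$. No simplification of $b_3$, no polar coordinates, no ODE.

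Your route, by contrast, first rewrites $b_3=\tfrac1{48}\partial_{z_1}\partial_{\bar z_1}S$ with $S=LR-\tfrac12 R^2$, which is a genuine structural observation not present in the paper. You then exploit homogeneity to reduce to a fourth order ODE on $S^1$; the trigonometric frequency count (left side of degree $\le 4m$, right side with a nonzero coefficient at $6m+2r$) cleanly kills the $\cos2r\theta$, $\sin2r\theta$ solution, and the local expansion of $M$ at a putative zero of $\tilde q$ (leading coefficient $-2\ell(4\ell+3)(\ell+2)c^4\neq0$) forces $\tilde q>0$. The key novelty is the monotone quantity $J=w''-2ww'+\tfrac13 w^3+r^2w$ with $J'=-\tfrac12(w')^2$, which together with periodicity gives $\phi''\equiv0$. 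I checked all the identities (the collapse of the bracket, the formula for $M$, the leading coefficient at a zero of $\tilde q$, and the derivative identity for $J$) and they are correct. What your approach buys is insight into \emph{why} $b_3$ is so rigid---it is a total $\partial\bar\partial$, and the resulting circle ODE carries a Lyapunov-type functional---whereas the paper's factor-tracking argument is shorter and entirely algebraic but less transparent.
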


To prove the theorem, we carry out some Hermitian analysis. For that,
we start with a few definitions and lemmas. In the remainder of this
subsection, we will write $z$ instead of $z_{1}$ for simplicity. 
\begin{defn}
Let $f\in\mathbb{C}[z,\zeta]$ be a polynomial of two variables. Fix
$a\in\mathbb{C}.$ Let $k\in\mathbb{N}_{0}$ and $\lambda\in\mathbb{C}.$
We say $f$ is \textit{divisible} by $(z+a\zeta)^{k}$ with coefficient
$\lambda,$ denoted by $f\sim D_{a}(k,\lambda),$ if $f(z,\zeta)=(z+a\zeta)^{k}\hat{f}(z,\zeta)$
for some $\hat{f}\in\mathbb{C}[z,\zeta]$ with $\hat{f}(-a,1)=\lambda.$ 
\end{defn}

It is clear that if $f\sim D_{a}(k,\lambda)$ with $k\geq1$, then
we have $f\sim D_{a}(k-1,0).$ In the following, we say $f\in\mathbb{C}[z,\zeta]$
is \textit{Hermitian} if $f(z,\bar{z})$ is real-valued for every
$z\in\mathbb{C}.$ 
\begin{lem}
\label{lm3} Let $f\in\mathbb{C}[z,\zeta]$ be a nonconstant Hermitian
homogeneous polynomial of two variables. Then there exist $a\in\mathbb{C},k\geq1$
and a nonzero $\lambda\in\mathbb{C}$ such that $f\sim D_{a}(k,\lambda).$
Moreover, if $f\neq cz^{m}\zeta^{m}$ for every real number $c\neq0$
and integer $m\geq1$, then we can further choose $a\neq0.$ 
\end{lem}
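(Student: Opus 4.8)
The plan is to exploit the fact that a homogeneous polynomial in two variables splits completely into linear forms over $\mathbb{C}$. Writing $d=\deg f\ge 1$ and $g(z):=f(z,1)$ (a nonzero polynomial of degree $\le d$, since $f$ nonconstant forces $f\not\equiv 0$, hence $g\not\equiv 0$), one has $f(z,\zeta)=c_{0}\,\zeta^{\,d-\deg g}\prod_{i}(z-\mu_{i}\zeta)$ for a nonzero constant $c_{0}$, where the $\mu_{i}\in\mathbb{C}$ (the ``slopes'') are the roots of $g$ listed with multiplicity. The whole statement then reduces to inspecting whether some slope $\mu_{i}$ is nonzero.

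First I would treat the case where every $\mu_{i}=0$, so that $f=c_{0}z^{j}\zeta^{s}$ is a single monomial with $j+s=d$. Here I would invoke the Hermitian hypothesis, which in terms of the coefficients $c_{jk}$ of $z^{j}\zeta^{k}$ reads $c_{jk}=\overline{c_{kj}}$: since the only nonzero coefficient sits at $(j,s)$, its conjugate partner at $(s,j)$ must also be nonzero, forcing $j=s$, and then $c_{0}=\overline{c_{0}}\in\mathbb{R}$. Thus $f=c\,z^{m}\zeta^{m}$ with $c\in\mathbb{R}\setminus\{0\}$ and $m=d/2\ge 1$. In this branch I take $a=0$, $k=m$, $\hat f(z,\zeta)=c\zeta^{m}$, so that $f=(z+0\cdot\zeta)^{m}\hat f$ and $\hat f(-a,1)=\hat f(0,1)=c\ne 0$, giving $f\sim D_{0}(m,c)$; and since $f$ is exactly of the excluded form, the ``moreover'' clause imposes no further demand.

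In the complementary case some root $\mu:=\mu_{i_{0}}$ is nonzero; let $k\ge 1$ be its exact multiplicity among the $\mu_{i}$. I would put $a=-\mu\ne 0$ and $\hat f(z,\zeta):=f(z,\zeta)/(z+a\zeta)^{k}=c_{0}\zeta^{\,d-\deg g}\prod_{\mu_{i}\ne\mu}(z-\mu_{i}\zeta)$, which is genuinely a polynomial since $(z+a\zeta)^{k}$ divides $f$, and which satisfies $\hat f(-a,1)=\hat f(\mu,1)=c_{0}\prod_{\mu_{i}\ne\mu}(\mu-\mu_{i})\ne 0$ because $k$ is the full multiplicity of $\mu$. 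Hence $\lambda:=\hat f(\mu,1)$ is a nonzero constant and $f\sim D_{a}(k,\lambda)$ with $a\ne 0$. Finally I would note that $c\,z^{m}\zeta^{m}$ has all of its slopes equal to $0$, so the existence of a nonzero slope $\mu$ is equivalent to $f$ not being of that excluded form; this case is therefore precisely the one addressed by the ``moreover'' clause, which completes both assertions.

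I do not expect a real obstacle here, as the lemma is elementary. The only point needing a little care is the coefficient bookkeeping in the monomial case: one must use Hermitian symmetry both to force the exponents of $z$ and $\zeta$ to be equal and to see that the constant is real, and one must check that whenever $a=0$ is unavoidable the polynomial is indeed of the excluded shape $c\,z^{m}\zeta^{m}$, so that the optional conclusion $a\ne 0$ is never actually required in that branch.
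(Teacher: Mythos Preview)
Your proof is correct and follows essentially the same approach as the paper: both arguments factor the homogeneous polynomial via $f(z,\zeta)=\zeta^{d}f(z/\zeta,1)$ and the fundamental theorem of algebra, use the Hermitian condition to show that the pure-monomial case forces $f=c\,z^{m}\zeta^{m}$ with $c$ real, and in the remaining case pick a nonzero root to produce $a\neq 0$. The only cosmetic difference is that the paper separates out the zero root as a factor $\eta^{m}$ from the start, while you treat all roots $\mu_i$ uniformly.
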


\begin{proof}
Write $d$ for the degree of $f$. Since $f$ is homogeneous, we have
\begin{equation}
f(z,\zeta)=\zeta^{d}f\left(\frac{z}{\zeta},1\right).\label{eqnq1-1}
\end{equation}
By assumption, $f(\eta,1)\in\mathbb{C}[\eta]$ is nonconstant, for
otherwise $f(z,\zeta)$ is not Hermitian. Using the fundamental theorem
of algebra, we may write 
\begin{equation}
f(\eta,1)=c\eta^{m}\prod_{j=1}^{l}(\eta-a_{j})^{k_{j}}.\label{eqnq2-1}
\end{equation}
Here $c\in\mathbb{C}$ is nonzero, and $m,l\geq0$ and $k_{j}\geq1$
satisfy $m+\sum_{j=1}^{l}k_{j}\leq d.$ Moreover, $a_{j}'$s are distinct
nonzero complex numbers. When $l=0,$ the above equation is understood
as $f(\eta,1)=c\eta^{m}.$ By (\ref{eqnq1-1}) and (\ref{eqnq2-1}),
we have 
\begin{equation}
f(z,\zeta)=cz^{m}\zeta^{n}\prod_{j=1}^{l}(z-a_{j}\zeta)^{k_{j}},\quad\text{where}\quad n=d-m-\sum_{j=1}^{l}k_{j}.\label{eqnqzx-1}
\end{equation}

We first consider the case where $l=0.$ In this case, $f(z,\zeta)=cz^{m}\zeta^{n}.$
Since $f$ is nonconstant and Hermitian, we must have $c\in\mathbb{R},c\neq0,$
and $n=m\geq1$. The conclusion of the lemma follows if we choose
$a=0,k=m\geq1,\lambda=c\neq0.$

We next assume $l\geq1.$ Then by (\ref{eqnqzx-1}), the conclusion
of the lemma follows if we choose $a=-a_{1}\neq0,k=k_{1}\geq1,\lambda=ca_{1}^{m}\prod_{j=2}^{l}(a_{1}-a_{j})^{k_{j}}\neq0$.
This proves the first part of Lemma \ref{lm3}.

Note if $f$ is not a multiple of $z^{m}\zeta^{m}$ for any integer
$m$, then it can only be the latter case, and this establishes the
second part of Lemma \ref{lm3}. 
\end{proof}
We next extend the above definition to rational functions. 
\begin{defn}
Let $g\in\mathbb{C}(z,\zeta)$ be a rational function. Write $g=\frac{f_{1}}{f_{2}},$
where $f_{1},f_{2}\in\mathbb{C}[z,\zeta]$ and $f_{2}\neq0.$ If $f_{i}\sim D_{a}(k_{i},\lambda_{i}),1\leq i\leq2,$
with $k_{1},k_{2}\geq0$ and $\lambda_{2}\neq0,$ then we say $g\sim D_{a}(k_{1}-k_{2},\frac{\lambda_{1}}{\lambda_{2}}).$
Note that $k_{1}-k_{2}$ could be negative. 
\end{defn}

Note if $g\in\mathbb{C}(z,\zeta)$ and $g\sim D_{a}(k,\lambda)$,
then we have $g\sim D_{a}(k-1,0).$ We next make a few more observations.

\begin{lem}
\label{lm5} If $g\in\mathbb{C}(z,\zeta)$ and $g\sim D_{a}(k,\lambda)$
for some $a\in\mathbb{C}$, then the following hold:

(1) $\partial_{z}g\sim D_{a}(k-1,k\lambda)$ and $\partial_{\zeta}g\sim D_{a}(k-1,ak\lambda);$

(2) $\partial_{z}\partial_{\zeta}g\sim D_{a}(k-2,ak(k-1)\lambda).$ 
\end{lem}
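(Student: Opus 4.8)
\textbf{Proof proposal for Lemma \ref{lm5}.}

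The plan is to reduce everything to the polynomial case via the quotient rule, and within the polynomial case to a bare-hands Leibniz computation in the single variable $\eta$ obtained by dehomogenizing at $z+a\zeta$. First I would treat $g$ as a genuine polynomial $f$ with $f\sim D_a(k,\lambda)$, $k\geq 0$; writing $f=(z+a\zeta)^k\hat f$ with $\hat f(-a,1)=\lambda$, the product rule gives
\[
\partial_z f=k(z+a\zeta)^{k-1}\hat f+(z+a\zeta)^k\partial_z\hat f=(z+a\zeta)^{k-1}\bigl[k\hat f+(z+a\zeta)\partial_z\hat f\bigr].
\]
Evaluating the bracket at $(z,\zeta)=(-a,1)$ kills the second summand (the factor $z+a\zeta$ vanishes there) and leaves $k\hat f(-a,1)=k\lambda$, so $\partial_z f\sim D_a(k-1,k\lambda)$. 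The identical computation with $\partial_\zeta$ produces a factor $a$ in front of $k(z+a\zeta)^{k-1}\hat f$ (since $\partial_\zeta(z+a\zeta)=a$), yielding $\partial_\zeta f\sim D_a(k-1,ak\lambda)$. Part (2) then follows by applying part (1) twice: $\partial_\zeta g\sim D_a(k-1,ak\lambda)$, and differentiating this in $z$ via part (1) with exponent $k-1$ and coefficient $ak\lambda$ gives $\partial_z\partial_\zeta g\sim D_a(k-2,(k-1)\cdot ak\lambda)=D_a(k-2,ak(k-1)\lambda)$, as claimed.

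For the general rational case $g=f_1/f_2$ with $f_i\sim D_a(k_i,\lambda_i)$, $\lambda_2\neq 0$, I would write $f_i=(z+a\zeta)^{k_i}\hat f_i$ and compute
\[
\partial_z g=\frac{(\partial_z f_1)f_2-f_1(\partial_z f_2)}{f_2^2}.
\]
By the polynomial case the numerator is $(z+a\zeta)^{k_1-1}\bigl[k_1\hat f_1+(z+a\zeta)\partial_z\hat f_1\bigr](z+a\zeta)^{k_2}\hat f_2-(z+a\zeta)^{k_1}\hat f_1(z+a\zeta)^{k_2-1}\bigl[k_2\hat f_2+(z+a\zeta)\partial_z\hat f_2\bigr]$, which carries an overall factor $(z+a\zeta)^{k_1+k_2-1}$; the denominator carries $(z+a\zeta)^{2k_2}$. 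Hence $\partial_z g\sim D_a(k_1+k_2-1-2k_2,\cdot)=D_a\bigl((k_1-k_2)-1,\cdot\bigr)$, and evaluating the reduced numerator bracket against the reduced denominator at $(-a,1)$ gives coefficient $\dfrac{k_1\lambda_1\lambda_2-\lambda_1 k_2\lambda_2}{\lambda_2^2}=(k_1-k_2)\dfrac{\lambda_1}{\lambda_2}=k\lambda$ with $k=k_1-k_2$. The $\partial_\zeta$ computation is the same with the extra factor $a$ propagating through each derivative of $z+a\zeta$, and (2) again follows by iterating (1). Throughout one must track that the relevant exponents stay $\geq 0$ (or that negative exponents are interpreted through the rational-function convention already fixed in the definition), but no genuine obstruction arises.

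The only mildly delicate point, and the one I would state carefully rather than skip, is the consistency of the $D_a$ notation under differentiation when $k=0$: if $g\sim D_a(0,\lambda)$ then $g=\hat f$ with $\hat f(-a,1)=\lambda$ and $g$ need not vanish on $\{z+a\zeta=0\}$, so $\partial_z g\sim D_a(-1,0)$ is really the statement that $(z+a\zeta)\partial_z g$ vanishes at $(-a,1)$ with derived coefficient $0\cdot\lambda=0$, which is automatic; the formulas hence remain valid with the convention that $D_a(-1,0)$ imposes no condition beyond what the rational-function definition already encodes. I expect no serious obstacle here — the lemma is a direct consequence of the Leibniz rule together with the bookkeeping built into the definitions of $D_a$ for polynomials and rational functions.
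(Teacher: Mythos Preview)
Your proposal is correct and follows essentially the same route as the paper: both arguments write $g=f_1/f_2$ with $f_i=(z+a\zeta)^{k_i}h_i$, apply the quotient rule, and factor $(z+a\zeta)^{k_1+k_2-1}$ out of the numerator to read off the exponent and coefficient. Your warm-up polynomial case and the discussion of the $k=0$ edge are extra commentary the paper omits, but the core computation is identical.
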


\begin{proof}
Write $g=\frac{f_{1}}{f_{2}}$ with $f_{1},f_{2}\in\mathbb{C}[z,\zeta],f_{2}\neq0.$
Write $f_{i}=(z+a\zeta)^{k_{i}}h_{i}$ for $1\leq i\leq2,$ where
$h_{1},h_{2}\in\mathbb{C}[z,\zeta],k_{1},k_{2}\geq0,k_{1}-k_{2}=k$
and $h_{2}(-a,1)\neq0,\frac{h_{1}(-a,1)}{h_{2}(-a,1)}=\lambda.$ We compute
\begin{align*}
\partial_{z}g & =\frac{f_{2}\partial_{z}f_{1}-f_{1}\partial_{z}f_{2}}{f_{2}^{2}}\\
 & =\frac{(k_{1}-k_{2})(z+a\zeta)^{k_{1}+k_{2}-1}h_{1}h_{2}+(z+a\zeta)^{k_{1}+k_{2}}(h_{2}\partial_{z}h_{1}-h_{1}\partial_{z}h_{2})}{(z+a\zeta)^{2k_{2}}h_{2}^{2}}.
\end{align*}
Then it is clear that $\partial_{z}g\sim D_{a}(k-1,k\lambda)$. Similarly
one can show $\partial_{\zeta}g\sim D_{a}(k-1,ak\lambda).$ This finishes
the proof of part (1). The conclusion in part (2) follows immediately
from part (1). 
\end{proof}
The statements in the next lemma follow from direct computations.
We omit the proof. 
\begin{lem}
\label{lm6} Let $g_{1},g_{2}\in\mathbb{C}(z,\zeta)$ and $a\in\mathbb{C}$.
Assume $g_{i}\sim D_{a}(k_{i},\lambda_{i})$ for $1\leq i\leq2$ where
$k_{i}\in\mathbb{Z}$ and $\lambda_{i}\in\mathbb{C}$, then the following
hold:

(1) $g_{1}g_{2}\sim D_{a}(k_{1}+k_{2},\lambda_{1}\lambda_{2});$

(2) $cg_{1}\sim D_{a}(k_{1},c\lambda_{1})$ for any complex number
$c;$

(3) $g_{1}+g_{2}\sim D_{a}(k_{1},\lambda_{1}+\lambda_{2})$ if $k_{1}=k_{2};$
and $g_{1}+g_{2}\sim D_{a}(k_{1},\lambda_{1})$ if $k_{1}<k_{2};$

(4) In addition assume $\lambda_{2}\neq0.$ Then $\frac{g_{1}}{g_{2}}\sim D_{a}\left(k_{1}-k_{2},\frac{\lambda_{1}}{\lambda_{2}}\right).$ 
\end{lem}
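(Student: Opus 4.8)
The final statement is Lemma \ref{lm6}, asserting four elementary arithmetic rules for the divisibility relation $g\sim D_a(k,\lambda)$ on rational functions in $\mathbb{C}(z,\zeta)$. Each rule follows by writing the participating functions in the normal form that defines the relation and computing. The plan is to reduce everything to the polynomial case already handled implicitly by the definition, using the factor $(z+a\zeta)$ as a formal uniformizer and tracking the ``leading coefficient'' $\lambda$, which is exactly the value $\hat f(-a,1)$ of the cofactor at the point $(z,\zeta)=(-a,1)$.

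\textbf{Step 1 (normal forms).} For each $g_i\sim D_a(k_i,\lambda_i)$, write $g_i = (z+a\zeta)^{k_i} h_i$ where $h_i\in\mathbb{C}(z,\zeta)$ is a ratio of two polynomials, neither divisible by $(z+a\zeta)$, and $h_i(-a,1)=\lambda_i$. Concretely, if $g_i=f_{i,1}/f_{i,2}$ with $f_{i,j}\sim D_a(k_{i,j},\lambda_{i,j})$ and $\lambda_{i,2}\neq0$, then $k_i=k_{i,1}-k_{i,2}$ and $\lambda_i=\lambda_{i,1}/\lambda_{i,2}$, and $h_i$ is the quotient of the respective cofactors. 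The point to verify once, carefully, is that this normal form is well-defined: the exponent $k_i$ and the value $\lambda_i$ do not depend on the chosen representation $f_{i,1}/f_{i,2}$. This is because $\mathbb{C}[z,\zeta]$ is a UFD and $(z+a\zeta)$ is prime, so the $(z+a\zeta)$-adic valuation of $g_i$ is well-defined (it is $k_i$), and the ``residue'' obtained by dividing out $(z+a\zeta)^{k_i}$ and evaluating at $(-a,1)$ is likewise well-defined.

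\textbf{Step 2 (the four identities).} Once Step 1 is in place, each claim is a one-line computation. For (1): $g_1 g_2 = (z+a\zeta)^{k_1+k_2} h_1 h_2$, and $(h_1 h_2)(-a,1) = \lambda_1\lambda_2$, with $h_1 h_2$ still coprime to $(z+a\zeta)$ (again by primality, since neither factor is divisible); hence $g_1 g_2 \sim D_a(k_1+k_2,\lambda_1\lambda_2)$. For (2): $c g_1 = (z+a\zeta)^{k_1}(c h_1)$ and $(c h_1)(-a,1) = c\lambda_1$; if $c\neq0$ this directly gives $c g_1\sim D_a(k_1,c\lambda_1)$, and if $c=0$ the statement is the vacuous convention that $0\sim D_a(k_1,0)$ for any $k_1$. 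For (3): if $k_1=k_2$ then $g_1+g_2 = (z+a\zeta)^{k_1}(h_1+h_2)$ and $(h_1+h_2)(-a,1)=\lambda_1+\lambda_2$; if $k_1<k_2$ then $g_1+g_2=(z+a\zeta)^{k_1}\bigl(h_1+(z+a\zeta)^{k_2-k_1}h_2\bigr)$ and the parenthesized factor evaluates at $(-a,1)$ to $\lambda_1+0=\lambda_1$. For (4): with $\lambda_2\neq0$, $g_1/g_2 = (z+a\zeta)^{k_1-k_2}(h_1/h_2)$ and $(h_1/h_2)(-a,1)=\lambda_1/\lambda_2$, where $h_1/h_2$ is coprime to $(z+a\zeta)$ in the sense that its numerator cofactor and denominator cofactor are.

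\textbf{Main obstacle.} The only genuine subtlety — and the reason the paper says ``we omit the proof'' while still deserving a clean argument — is the well-definedness in Step 1: the definition of $g\sim D_a(k,\lambda)$ for rational $g$ is given through a choice of representation $g = f_1/f_2$, and one must check it is representation-independent before any of the arithmetic rules can be asserted unambiguously. This is handled by invoking that $(z+a\zeta)$ is a prime element of the UFD $\mathbb{C}[z,\zeta]$, so cross-multiplying two representations $f_1/f_2 = \tilde f_1/\tilde f_2$ and comparing $(z+a\zeta)$-valuations and residues at $(-a,1)$ forces $k$ and $\lambda$ to agree. Everything after that is formal bookkeeping of exponents and of evaluations at the single point $(-a,1)$, and requires no further computation.
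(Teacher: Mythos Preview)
Your proof is correct and is precisely the ``direct computation'' the paper declines to write out; the paper omits the proof entirely.

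Two small remarks. First, your Step~1 slightly overclaims: you write $h_i$ as a ratio of polynomials \emph{neither} divisible by $(z+a\zeta)$, but if $\lambda_i=0$ the numerator cofactor \emph{is} divisible by $(z+a\zeta)$. This does no damage, since Step~2 only uses that $h_i$ is regular at $(-a,1)$ with value $\lambda_i$ (equivalently, that the denominator cofactor is nonvanishing there), and that weaker statement is exactly what the paper's definition guarantees. Second, your well-definedness discussion is sound but not strictly required: in the paper, $g\sim D_a(k,\lambda)$ is a \emph{relation}, not a function of $g$ (indeed the paper notes $g\sim D_a(k,\lambda)\Rightarrow g\sim D_a(k-1,0)$), so the lemma is simply ``given witnessing representations for $g_1,g_2$, produce one for the combination'', which your Step~2 does directly by multiplying or adding the polynomial fractions.
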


We are now ready to prove \prettyref{thm: thm about vanish coeff.}. 
\begin{proof}[Proof of \prettyref{thm: thm about vanish coeff.}]
Recall $q=\partial_{z}\partial_{\bar{z}}p$ and $Q=q(\partial_{z}\partial_{\bar{z}}p)-(\partial_{z}q)(\partial_{\bar{z}}q)$
are real polynomials in $\mathbb{C}[z,\bar{z}]$. Note we can assume
$q$ is nonconstant, for otherwise the conclusion is trivial. 
We will identify $p(z,\bar{z})\in\mathbb{C}[z,\bar{z}]$ with its
complexification $p(z,\zeta)\in\mathbb{C}[z,\zeta]$ (where we replace
$\bar{z}$ by a new variable $\zeta$). Moreover, since $p(z,\bar{z})$
is real-valued, $p(z,\zeta)$ is Hermitian. Likewise for $q(z,\bar{z})$
and $Q(z,\bar{z}).$ To establish \prettyref{thm: thm about vanish coeff.},
it suffices to show that $q(z,\zeta)=c_{0}z^{m}\zeta^{m}$ for some
constant $c_{0}$ and integer $m\geq1$. Seeking a contraction, suppose
the conclusion fails. Then by Lemma \ref{lm3}, we can find some complex
numbers $a\neq0,\lambda\neq0$, and some integer $k\geq1$ such that
$q\sim D_{a}(k,\lambda).$ That is, we can write $q(z,\zeta)=(z+a\zeta)^{k}h,$
where $h\in\mathbb{C}[z,\zeta]$ and $h(-a,1)=\lambda.$ A direct
computation yields the following holds for some $\hat{h}\in\mathbb{C}[z,\zeta].$
\[
Q(z,\zeta)=-ak(z+a\zeta)^{2k-2}h^{2}+(z+a\zeta)^{2k-1}\hat{h}.
\]
Thus we have 
$Q\sim D_{a}\left(2k-2,-ak\lambda^{2}\right).$ 
By assumption $b_{3}\equiv0.$ We multiply it by $\frac{48}{q}$ and
use the standard complexification to get 
\begin{equation}
\left[q^{-1}\partial_{z}\partial_{\zeta}\right]^{2}q^{-3}Q-q^{-4}Q\left[\partial_{z}\partial_{\zeta}\right]q^{-3}Q-q^{-1}\left[\partial_{\zeta}\left(q^{-3}Q\right)\right]\left[\partial_{z}\left(q^{-3}Q\right)\right]=0.\label{eqncomb3}
\end{equation}
On the other hand, by Lemma \ref{lm6}, 
$q^{3}\sim D_{a}\left(3k,\lambda^{3}\right)$ and $q^{-3}Q\sim D_{a}\left(-k-2,-\frac{ak}{\lambda}\right).$
Then by Lemma \ref{lm5}, 
\[
\partial_{z}\left(q^{-3}Q\right)\sim D_{a}\left(-k-3,\frac{ak(k+2)}{\lambda}\right);\quad\partial_{\zeta}\left(q^{-3}Q\right)\sim D_{a}\left(-k-3,\frac{a^{2}k(k+2)}{\lambda}\right).
\]
Using the above and Lemma \ref{lm6}, we can compute the last term
on the left hand side of (\ref{eqncomb3}): 
\[
-q^{-1}\left[\partial_{\zeta}\left(q^{-3}Q\right)\right]\left[\partial_{z}\left(q^{-3}Q\right)\right]\sim D_{a}\left(-3k-6,-\frac{a^{3}k^{2}(k+2)^{2}}{\lambda^{3}}\right).
\]
Similarly, we compute the first two terms on the left hand side of
(\ref{eqncomb3}): 
\[
\left[q^{-1}\partial_{z}\partial_{\zeta}\right]^{2}q^{-3}Q\sim D_{a}\left(-3k-6,-\frac{a^{3}k(k+2)(k+3)(2k+4)(2k+5)}{\lambda^{3}}\right);
\]
\[
-q^{-4}Q\left[\partial_{z}\partial_{\zeta}\right]q^{-3}Q\sim D_{a}\left(-3k-6,-\frac{a^{3}k^{2}(k+2)(k+3)}{\lambda^{3}}\right).
\]
Consequently, the left hand side of (\ref{eqncomb3}) equals to $D_{a}(-3k-6,T),$
where 
\[
T=-\frac{a^{3}k(k+2)}{\lambda^{3}}\left[k(k+2)+(k+3)(2k+4)(2k+5)+k(k+3)\right]\neq0.
\]
This means the left hand side of (\ref{eqncomb3}) is nonzero, a contradiction.
The proof is completed. 
\end{proof}

\begin{rem*}
It would be interesting to compare our Proposition \ref{prop:model asymptotics} and Theorem \ref{thm: thm about vanish coeff.} with the classical work of Bedford and Pinchuk \cite{BedfordPinchuk89}, where the same model domain (corresponding to the $p$ in Theorem \ref{thm: thm about vanish coeff.}) appears. Both proofs use some rescaling arguments, however they are different in nature. The approach in \cite{BedfordPinchuk89} 
utilizes the noncompact automorphism group to scale the domain to the model domain. On the other hand, in our Proposition \ref{prop:model asymptotics}, we use the rescaling/dilation operator $\delta_{t^{-\frac{1}{r}}}$ to get asymptotic information on the model Bergman kernel.
\end{rem*}

\subsection{The case $p=\frac{c}{2}\left(z_{1}\bar{z}_{1}\right)^{\frac{r}{2}}$}

We next consider the particular case when $p=\frac{c}{2}\left(z_{1}\bar{z}_{1}\right)^{\frac{r}{2}}$
for $c>0$ (recall $r$ must be even). Here it becomes possible to
compute the Bergman kernel $B_{p}$ explicitly. 
\begin{thm}
\label{thm:Monomial model calc.} The model Bergman kernel corresponding
to the homogeneous subhamonic polynomial $p=\frac{c}{2}\left(z_{1}\bar{z}_{1}\right)^{\frac{r}{2}}$
is given by 
\begin{align}
B_{p}\left(z_{1},z'_{1}\right) & =\frac{re^{-\left[p\left(z_{1}\right)+p\left(z_{1}'\right)\right]}c^{\frac{2}{r}}}{2\pi}G\left(c^{\frac{2}{r}}z_{1}\overline{z_{1}'}\right),\quad\textrm{where }\label{eq:model Bergman kernel-1}\\
G\left(x\right) & \coloneqq\sum_{\alpha=0}^{\frac{r}{2}-1}\frac{x^{\alpha}}{\Gamma\left(\frac{2\left(\alpha+1\right)}{r}\right)}+x^{\frac{r}{2}-1}e^{x^{\frac{r}{2}}}\left[\sum_{\alpha=0}^{\frac{r}{2}-1}\frac{\Gamma\left(\frac{2\left(\alpha+1\right)}{r}\right)-\Gamma\left(\frac{2\left(\alpha+1\right)}{r},x^{\frac{r}{2}}\right)}{\Gamma\left(\frac{2\left(\alpha+1\right)}{r}\right)}\right]\label{eq:function G}
\end{align}
is given in terms of the incomplete gamma function $\Gamma\left(a,u\right)\coloneqq\int_{u}^{\infty}t^{a-1}e^{-t}dt$,
$u>0$. 
\end{thm}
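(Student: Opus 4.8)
The plan is to reduce the statement to the computation of a \emph{weighted} Bergman kernel with a rotation-invariant weight, where an explicit orthogonal monomial basis is available, and then to resum the resulting power series into the closed form \prettyref{eq:function G} using a classical series representation of the incomplete Gamma function.

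First I would use \prettyref{eq:relation between Bergman projectors} with $t=1$. Since $B_{p}=\mathcal{B}_{1}$, this gives $B_{p}\left(z_{1},z_{1}'\right)=e^{-p\left(z_{1}\right)-p\left(z_{1}'\right)}\mathcal{B}_{1}^{p}\left(z_{1},z_{1}'\right)$, where $\mathcal{B}_{1}^{p}$ is the Bergman projector of $L_{p}^{2}\left(\mathbb{C}_{z_{1}}\right)$ onto its holomorphic subspace, i.e.\ with respect to the measure $e^{-2p}\,dV$. Because $p=\tfrac{c}{2}\left(z_{1}\bar{z}_{1}\right)^{r/2}$ is radial, distinct monomials $z_{1}^{k}$ are orthogonal in $L_{p}^{2}$, and since $e^{-2p}$ decays faster than any polynomial grows they span a dense subspace of the holomorphic subspace; hence
\[
\mathcal{B}_{1}^{p}\left(z_{1},z_{1}'\right)=\sum_{k=0}^{\infty}\frac{z_{1}^{k}\,\overline{z_{1}'}^{\,k}}{\left\Vert z_{1}^{k}\right\Vert _{L_{p}^{2}}^{2}}.
\]
Passing to polar coordinates and substituting $u=c\rho^{r}$ turns $\left\Vert z_{1}^{k}\right\Vert _{L_{p}^{2}}^{2}=2\pi\int_{0}^{\infty}\rho^{2k+1}e^{-c\rho^{r}}\,d\rho$ into a Gamma integral and yields $\left\Vert z_{1}^{k}\right\Vert _{L_{p}^{2}}^{2}=\tfrac{2\pi}{r}c^{-2(k+1)/r}\Gamma\!\left(\tfrac{2(k+1)}{r}\right)$. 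Substituting this back, one obtains
\[
\mathcal{B}_{1}^{p}\left(z_{1},z_{1}'\right)=\frac{rc^{2/r}}{2\pi}\,\widetilde{G}\!\left(c^{2/r}z_{1}\overline{z_{1}'}\right),\qquad\widetilde{G}(x)\coloneqq\sum_{k=0}^{\infty}\frac{x^{k}}{\Gamma\!\left(\tfrac{2(k+1)}{r}\right)},
\]
an entire function of $x$. This already reproduces the shape of \prettyref{eq:model Bergman kernel-1}, so it remains only to identify $\widetilde{G}$ with $G$.

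The remaining and main step is the resummation. Writing $m\coloneqq r/2\in\mathbb{N}$, I would split $\widetilde{G}$ into the terms with $k<m$ and those with $k\ge m$; the first (finite) sum is exactly the polynomial head of $G$ in \prettyref{eq:function G}. For the tail I would reindex $k=\alpha+m(j+1)$ with $0\le\alpha\le m-1$ and $j\ge0$, so that $\tfrac{2(k+1)}{r}=\tfrac{\alpha+1}{m}+j+1$ and $x^{k}=x^{\alpha}(x^{m})^{j+1}$, and then invoke the confluent series representation of the lower incomplete Gamma function together with $\gamma(a,y)=\Gamma(a)-\Gamma(a,y)$:
\[
\gamma(a,y)=\int_{0}^{y}t^{a-1}e^{-t}\,dt=y^{a}\,\Gamma(a)\,e^{-y}\sum_{n=0}^{\infty}\frac{y^{n}}{\Gamma(a+n+1)},
\]
applied with $a=\tfrac{\alpha+1}{m}$ and $y=x^{m}=x^{r/2}$. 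Since $x^{\alpha}\,y^{\,1-(\alpha+1)/m}=x^{m-1}$, the $\alpha$-dependence of the prefactor collapses, and the tail becomes exactly $x^{r/2-1}e^{x^{r/2}}\sum_{\alpha=0}^{r/2-1}\frac{\Gamma(2(\alpha+1)/r)-\Gamma(2(\alpha+1)/r,\,x^{r/2})}{\Gamma(2(\alpha+1)/r)}$, which is the second term in \prettyref{eq:function G}. Therefore $\widetilde{G}=G$ and \prettyref{eq:model Bergman kernel-1} follows.

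The routine points to verify along the way are the density of the monomials in the holomorphic subspace of $L_{p}^{2}$ (standard for radial weights of super-polynomial growth, e.g.\ by expanding a holomorphic $L_{p}^{2}$ function in its Taylor series and checking convergence in norm) and the legitimacy of interchanging the sum with the polar integration and with the Gamma-series manipulations, both justified by absolute convergence. The only genuinely non-mechanical point is recognizing the tail of $\widetilde{G}$ as an incomplete-Gamma series; the rest is bookkeeping. One could alternatively reach the same formula by combining \prettyref{eq:reln between Bergman kernels} with Haslinger's explicit description of the model kernel, but the direct route above appears shortest in this case.
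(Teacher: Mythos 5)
Your proposal is correct and follows essentially the same route as the paper: both compute the kernel from the explicit orthonormal monomial basis of the model space (the paper writes the basis of $\ker\Box_{p}$ directly as $z_{1}^{\alpha}e^{-p}$ times normalizing constants, which is your weighted-space computation in disguise), arrive at the same power series $\sum_{k}x^{k}/\Gamma\bigl(\tfrac{2(k+1)}{r}\bigr)$, and then resum its tail into incomplete gamma functions. The only divergence is in that last resummation step, where the paper derives a first-order ODE for the tail and solves it with integrating factor $e^{-y}$ and the condition $F_{0}(0)=0$, while you reindex the tail modulo $r/2$ and quote the confluent series for $\gamma(a,y)$; these are two derivations of the same identity.
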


\begin{proof}
From the formulas $\Box_{p}=\bar{\partial}_{p}^{*}\bar{\partial}_{p}$
and $\bar{\partial}_{p}\coloneqq\partial_{\bar{z}_{1}}+\partial_{\bar{z}_{1}}p=\partial_{\bar{z}_{1}}+\frac{cr}{4}z_{1}^{\frac{r}{2}}\bar{z}_{1}^{\frac{r}{2}-1}$,
an orthonormal basis for $\textrm{ker}\left(\Box_{p}\right)$ is easily
found to be 
\begin{align*}
s_{\alpha} & \coloneqq\left(\frac{1}{2\pi}\frac{r}{\Gamma\left(\frac{2\left(\alpha+1\right)}{r}\right)}c^{\frac{2\left(\alpha+1\right)}{r}}\right)^{1/2}z_{1}^{\alpha}e^{-p},\quad\alpha\in\mathbb{N}_{0}.
\end{align*}
Since $B_{p}=\sum s_{\alpha}\overline{s_{\alpha}}$, we have 
\begin{align}
B_{p}\left(z_{1},z_{1}'\right) & =\frac{re^{-\left[p\left(z_{1}\right)+p\left(z'_{1}\right)\right]}}{2\pi}\sum_{\alpha\in\mathbb{N}_{0}}\frac{1}{\Gamma\left(\frac{2\left(\alpha+1\right)}{r}\right)}c^{\frac{2\left(\alpha+1\right)}{r}}\left(z_{1}\overline{z_{1}'}\right)^{\alpha}.\label{eq:Bergman kernel orthogonal basis}
\end{align}
To compute the above in a closed form, consider the series 
\begin{align*}
F\left(y\right)\coloneqq\sum_{\alpha=0}^{\infty}\frac{y^{\frac{\alpha+1}{s}-1}}{\Gamma\left(\frac{\alpha+1}{s}\right)}=\sum_{\alpha=0}^{s-1}\frac{y^{\frac{\alpha+1}{s}-1}}{\Gamma\left(\frac{\alpha+1}{s}\right)}+\underbrace{\sum_{\alpha=s}^{\infty}\frac{y^{\frac{\alpha+1}{s}-1}}{\Gamma\left(\frac{\alpha+1}{s}\right)}}_{F_{0}\left(y\right)\coloneqq},
\end{align*}
for $s=\frac{r}{2}$. Differentiating the second term in the series
and using $\Gamma\left(z+1\right)=z\Gamma\left(z\right)$ yields $F_{0}'\left(y\right)=F_{0}\left(y\right)+\sum_{\alpha=0}^{s-1}\frac{y^{\frac{\alpha+1}{s}-1}}{\Gamma\left(\frac{\alpha+1}{s}\right)}$
for $y>0.$ This ODE can be solved (uniquely) with the boundary condition
$F_{0}\left(0\right)=0$ to give 
\begin{align}
F_{0}\left(y\right)=e^{y}\left[\sum_{\alpha=0}^{s-1}\frac{\Gamma\left(\frac{\alpha+1}{s}\right)-\Gamma\left(\frac{\alpha+1}{s},y\right)}{\Gamma\left(\frac{\alpha+1}{s}\right)}\right]\label{eq:important series}
\end{align}
in terms of the incomplete gamma function. Thus in particular we have
computed $F\left(y\right)\coloneqq y^{\frac{1}{s}-1}G\left(y^{\frac{1}{s}}\right)$,
where $G$ is as defined in \prettyref{eq:function G}. Finally we
note from \prettyref{eq:Bergman kernel orthogonal basis} that 
\[
B_{p}\left(z,z'\right)=\frac{re^{-\left[p\left(z_{1}\right)+p\left(z'_{1}\right)\right]}c^{\frac{2}{r}}}{2\pi}x^{s-1}F\left(x^{s}\right),
\]
for $x=c^{\frac{2}{r}}z_{1}\overline{z'_{1}}$, completing the proof. 
\end{proof}

\section{\label{sec:Main theorem proof} Proof of the main theorem}

In this section we finally prove \prettyref{thm:main theorem}. 
\begin{proof}[Proof of \prettyref{thm:main theorem}]
It suffices to show that $D$ is strongly pseudoconvex, or the type
$r=2$ along the boundary, as thereafter one can apply Fu-Wong \cite{Fu-Wong97}
and Nemirovski-Shafikov \cite{Nemirovski-Shafikov-2006}. To this
end, suppose $x^{*}\in\partial D$ is a point on the boundary of type
$r=r\left(x^{*}\right)\geq2$. By \prettyref{eq:Monge Ampere determinant}
and Proposition \prettyref{cor:Cor KE iff B fn const}, under the
assumption of \prettyref{thm:main theorem}, the Bergman kernel $K=K_{D}$
of the domain satisfies the following Monge-Ampère equation inside
$D.$ 
\begin{align}
J\left(K\right) & =\det{\begin{pmatrix}K & \bar{Z}K & \bar{W}K\\
ZK & \left(Z\bar{Z}-\left[Z,\bar{Z}\right]^{0,1}\right)K & \left(Z\bar{W}-\left[Z,\bar{W}\right]^{0,1}\right)K\\
WK & \left(W\bar{Z}-\left[W,\bar{Z}\right]^{0,1}\right)K & \left(W\bar{W}-\left[W,\bar{W}\right]^{0,1}\right)K
\end{pmatrix}}\label{eq:MA equation for K}\\
 & =\frac{9\pi^{2}}{2}K^{4}.\nonumber 
\end{align}
Here we have used the orthonormal frame of $T^{1,0}\mathbb{C}^{2}$
given by $Z=\frac{1}{2}\left(U_{1}-iU_{2}\right)$, $W=\frac{1}{2}\left(U_{0}-iU_{3}\right)$
defined prior to \prettyref{thm:HS thm}. Using \prettyref{eq:Christ normal form}
and \prettyref{eq:coords =00003D00003D00003D000026 vector fields along the path},
we compute the $\left(0,1\right)$ components of the commutators above:
\begin{align*}
\left[Z,\bar{Z}\right]^{0,1}= & \left[-\Delta p\left(z_{1}\right)\frac{i}{2}\partial_{x_{3}}\right]^{0,1}+O\left(-1\right)\\
= & \frac{\Delta p\left(z_{1}\right)}{2}\left(W-\bar{W}\right)^{0,1}+O\left(-1\right)\\
= & -\frac{\Delta p\left(z_{1}\right)}{2}\bar{W}+O\left(-1\right);\\
\left[Z,\bar{W}\right]^{0,1}= & ~O\left(-r\right);\quad\left[W,\bar{Z}\right]^{0,1}=O\left(-r\right);\quad\left[W,\bar{W}\right]^{0,1}=O\left(-2r+1\right).
\end{align*}

This allows us to compute the most singular term in the asymptotics
of both sides of \prettyref{eq:MA equation for K} as $z\rightarrow x^{*}$
along the tangential path $z\left(\epsilon\right)$ in \prettyref{eq:tangential path}.
By \prettyref{thm:HS thm}, one obtains along $z\left(\epsilon\right)$,
\begin{align*}
{\scriptstyle J\left(K\right)=\left[\left(-2\rho\right)^{-2-\frac{2}{r}}\right]^{4}\left[\det\begin{pmatrix}\tilde{B}_{p,0} & \partial_{\bar{z}_{1}}\tilde{B}_{p,0} & \tilde{B}_{p,1}\\
\partial_{z_{1}}\tilde{B}_{p,0} & \partial_{z_{1}}\partial_{\bar{z}_{1}}\tilde{B}_{p,0}+\left[\frac{\Delta p}{2}\right]\tilde{B}_{p,1} & \partial_{z_{1}}\tilde{B}_{p,1}\\
\tilde{B}_{p,1} & \partial_{\bar{z}_{1}}\tilde{B}_{p,1} & \tilde{B}_{p,2}
\end{pmatrix}\left(z_{1,V}\right)+o_{\epsilon}\left(1\right)\right]}
\end{align*}

\begin{align*}
K^{4}=\left[\left(-2\rho\right)^{-2-\frac{2}{r}}\right]^{4}\left[\tilde{B}_{p,0}\left(z_{1,V}\right)^{4}+o_{\epsilon}\left(1\right)\right].
\end{align*}
Here we say a function $\phi$ is $o_{\epsilon}\left(1\right)$ if
$\phi(\epsilon)$ goes to $0$ as $\epsilon\rightarrow0^{+}.$ (Recall
$\rho=-\epsilon^{r}$ along the path). Thus comparing the leading
coefficients in the asymptotics gives the following equation 
\begin{equation}
\det\begin{pmatrix}\tilde{B}_{p,0} & \partial_{\bar{z}_{1}}\tilde{B}_{p,0} & \tilde{B}_{p,1}\\
\partial_{z_{1}}\tilde{B}_{p,0} & \partial_{z_{1}}\partial_{\bar{z}_{1}}\tilde{B}_{p,0}+\left[\frac{\Delta p}{2}\right]\tilde{B}_{p,1} & \partial_{z_{1}}\tilde{B}_{p,1}\\
\tilde{B}_{p,1} & \partial_{\bar{z}_{1}}\tilde{B}_{p,1} & \tilde{B}_{p,2}
\end{pmatrix}\left(z_{1}\right)=\frac{9\pi^{2}}{2}\tilde{B}_{p,0}\left(z_{1}\right)^{4},\label{eq:comparing coeff. eqn.}
\end{equation}
at each $z_{1}\in\mathbb{R}^{2}$, for the model Bergman kernel. Here
$\tilde{B}_{p,\alpha_{0}}$ is as defined in \prettyref{eq:computation of the leading term}.

Finally, one chooses $z_{1}$ such that $\Delta p\left(z_{1}\right)\neq0$
and substitutes $z_{1}\mapsto t^{\frac{1}{r}}z_{1}$ in the last equation
\prettyref{eq:comparing coeff. eqn.} above for the model. The terms
involved in the above equation are then of the following form from
the definition \prettyref{eq:computation of the leading term}. 
\begin{align*}
\tilde{B}_{p,\alpha_{0}}\left(t^{\frac{1}{r}}z_{1}\right) & =\frac{1}{\pi}\int_{0}^{\infty}e^{-s}s^{1+\frac{2}{r}+\alpha_{0}}B_{p}\left(s^{\frac{1}{r}}t^{\frac{1}{r}}z_{1}\right)ds\\
 & =\frac{t^{-2-\frac{2}{r}-\alpha_{0}}}{\pi}\int_{0}^{\infty}e^{-\frac{\tau}{t}}\tau^{1+\frac{2}{r}+\alpha_{0}}B_{p}\left(\tau^{\frac{1}{r}}z_{1}\right)d\tau
\end{align*}

Next we use Proposition \prettyref{prop:model asymptotics} to obtain
an asymptotic expansion for the above and its derivatives. Namely,
since the kernel $\tau^{1+\frac{2}{r}+\alpha_{0}}B_{p}\left(\tau^{\frac{1}{r}}z_{1}\right)$
 is a classical symbol in $S_{\tau,\textrm{cl}}^{2+\alpha_{0}}$  by Proposition
\prettyref{prop:model asymptotics}, the standard asymptotics
for its Laplace transform (e.g., using \cite[eqn. 1.6]{Boutet-Sjostrand76})
yield
\begin{align}
	& \left[\partial_{z_{1}}^{\alpha_{1}}\partial_{\bar{z}_{1}}^{\alpha_{2}}\tilde{B}_{p,\alpha_{0}}\right]\left(t^{\frac{1}{r}}z_{1}\right)\label{eq:asymptotic of det terms}\\
	& =t^{1-\frac{2+\alpha_{1}+\alpha_{2}}{r}}\left[\sum_{j=0}^{N+2+\alpha_{0}}c_{j}t^{-j}+\sum_{j=0}^{N}d_{j}t^{-\left(3+\alpha_{0}+j\right)}\ln t+O\left(t^{-\left(3+\alpha_{0}+N\right)}\right)\right],\nonumber 
\end{align}
$\forall N\in\mathbb{N},$ as $t\rightarrow\infty$. The logarithmic
terms above arise from integrating terms of order $\tau^{j}$, with $j$ a negative integer,
in the classical expansion of the given symbol. In particular the
leading logarithmic term has coefficient $d_{0}=\frac{1}{2\pi^{2}}\partial_{z_{1}}^{\alpha_{1}}\partial_{\bar{z}_{1}}^{\alpha_{2}}b_{3+\alpha_{0}}$.

The above allows us to compute the asymptotics of both sides of the
equation \prettyref{eq:comparing coeff. eqn.} as $t\rightarrow\infty$.
In particular the right hand side of \prettyref{eq:comparing coeff. eqn.}
is seen to contain the logarithmic term 
\[
\frac{9\pi^{2}}{2}b_{3}^{4}\left(\frac{1}{2\pi^{2}}t^{-2-\frac{2}{r}}\ln t\right)^{4}
\]
in its asymptotic expansion. Such a term involving the fourth power
of a logarithm is missing from the left hand side of \prettyref{eq:comparing coeff. eqn.}.
This particularly gives $b_{3}=0$.

Using \prettyref{thm: thm about vanish coeff.}, it now follows that
$q(z,\bar{z})=c_{0}(z_{1}\bar{z}_{1})^{\frac{r}{2}-1}$ for some $c_{0}>0$.
Since $p$ has no purely holomorphic or anti-holomorphic terms in
$z_{1}$, this gives $p=\frac{c}{2}\left(z_{1}\bar{z}_{1}\right)^{\frac{r}{2}}$
for some $c>0.$ 

However, the model kernel $B_{p}$ for this potential $p=\frac{c}{2}\left(z_{1}\bar{z}_{1}\right)^{\frac{r}{2}}$
was computed in \prettyref{thm:Monomial model calc.}. Suppose $r>2$.
By \prettyref{thm:Monomial model calc.} and definition of $\tilde{B}_{p,\alpha_{0}}$
in \prettyref{eq:computation of the leading term}, 
\begin{align*}
\tilde{B}_{p,\alpha_{0}}\left(0\right) & =\frac{1}{\pi}\Gamma\left(2+\frac{2}{r}+\alpha_{0}\right)B_{p}\left(0\right)=\frac{1}{2\pi^{2}}\Gamma\left(2+\frac{2}{r}+\alpha_{0}\right)\frac{r}{\Gamma\left(\frac{2}{r}\right)}c^{\frac{2}{r}};\\
\left[\partial_{z_{1}}\tilde{B}_{p,\alpha_{0}}\right]\left(0\right) & =\left[\partial_{\overline{z}_{1}}\tilde{B}_{p,\alpha_{0}}\right]\left(0\right)=0;\\
\left[\partial_{z_{1}}\partial_{\bar{z}_{1}}\tilde{B}_{p,\alpha_{0}}\right]\left(0\right) & =\frac{1}{\pi}\Gamma\left(2+\frac{4}{r}+\alpha_{0}\right)\left[\partial_{z_{1}}\partial_{\bar{z}_{1}}B_{p}\right]\left(0\right)\\
 & =\frac{1}{2\pi^{2}}\Gamma\left(2+\frac{4}{r}+\alpha_{0}\right)\frac{r}{\Gamma\left(\frac{4}{r}\right)}c^{\frac{4}{r}}.
\end{align*}

Plugging the above into \prettyref{eq:comparing coeff. eqn.} with
$z_{1}=0$, and noting $\Delta p(0)=0$ as $r>2$, we obtain 
\begin{align*}
 & \left(\frac{r}{2\pi^{2}}\right)^{3}\frac{\Gamma\left(2+\frac{4}{r}\right)}{\Gamma\left(\frac{4}{r}\right)}c^{\frac{8}{r}}\left[\frac{\Gamma\left(2+\frac{2}{r}\right)}{\Gamma\left(\frac{2}{r}\right)}\frac{\Gamma\left(4+\frac{2}{r}\right)}{\Gamma\left(\frac{2}{r}\right)}-\frac{\Gamma\left(3+\frac{2}{r}\right)}{\Gamma\left(\frac{2}{r}\right)}\frac{\Gamma\left(3+\frac{2}{r}\right)}{\Gamma\left(\frac{2}{r}\right)}\right]\\
 & =\frac{9\pi^{2}}{2}\left[\frac{r}{2\pi^{2}}\frac{\Gamma\left(2+\frac{2}{r}\right)}{\Gamma\left(\frac{2}{r}\right)}c^{\frac{2}{r}}\right]^{4}.
\end{align*}
Using $\Gamma\left(z+1\right)=z\Gamma\left(z\right)$, the above simplifies
to the equation 
\[
\left(1+\frac{4}{r}\right)\left(2+\frac{2}{r}\right)=\frac{9}{4}\left(1+\frac{2}{r}\right)^{2}.
\]
Solving this quadratic equation yields $r=2$, a plain contradiction.
This finishes the proof. 
\end{proof}
\begin{rem}
Note in our proof above, we compared the $\left(t^{-2-\frac{2}{r}}\ln t\right)^{4}$
term on both sides of \prettyref{eq:comparing coeff. eqn.}. For that,
we only used the information of $b_{3}$, where $b_{3}$ arises in
the coefficient of the first $\ln t$ term in the asymptotics for
the model Bergman kernel (see \prettyref{eq:asymptotic of det terms}).
The authors also compared the non-logarithmic terms on two sides of
\prettyref{eq:comparing coeff. eqn.}: the $\left(t^{-2-\frac{2}{r}}\right)^{4}$
and $\left(t^{-2-\frac{2}{r}}\right)^{4}t^{-1}$ terms, whose calculations
then involve $b_{0}$ and $b_{1}.$ Nevertheless, we only got tautologies
and thus derived no contradiction. It is interesting to compare this
with the proofs of Cheng's conjecture. In dimension $2$, Fu-Wong
\cite{Fu-Wong97} used information of the logarithmic term in the
Fefferman expansion of the Bergman kernel \prettyref{eq:Fefferman expansion};
while in higher dimension, Huang and the second author \cite{Huang-Xiao21}
utilized information of the non-logarithmic term (principal singular
term) in the expansion \prettyref{eq:Fefferman expansion}. 
\end{rem}

 \bibliographystyle{siam}
\bibliography{biblio2}

\end{document}